\newtheorem{theorem}{Theorem}
\newtheorem{problem}{Problem}
\title{Optimal Motions of an Elastic Structure under Finite-Dimensional Distributed Control \thanks{The study has been done under financial support of
	the Russian Science Foundation (grant 21-11-00151).}
}
\author{ \href{https://orcid.org/ 0000-0001-6526-6246}{\includegraphics[scale=0.06]{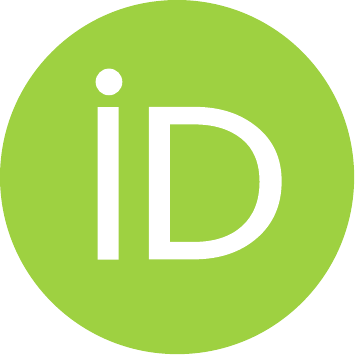}\hspace{1mm}Georgy Kostin}
 \\	Ishlinsky Institute for Problems in Mechanics RAS\\ 
	Moscow, Russia \\	
		\texttt{kostin@ipmnet.ru} \\
	\And
	\href{https://orcid.org/0000-0001-7210-5395}{\includegraphics[scale=0.06]{orcid.pdf}\hspace{1mm}Alexander Gavrikov} \\
	Department of Mathematics\\
	Penn State University\\
	State College, USA \\
	\texttt{avg6113@psu.edu} \\
}
\begin{document}
\def\ds{\displaystyle}
\maketitle

\begin{abstract}
An optimal control problem for  longitudinal  motions of {a thin} elastic rod is considered.
We suppose that a normal force, which changes piecewise constantly along the rod's length, is applied to the cross section {so} that the positions of force jumps are equidistantly placed along the length. Additionally,   external loads act at the rod ends. These {distributed  force} and boundary loads  are considered as control functions of the dynamic system. 
Given  initial and  terminal states at fixed time instants, the problem is to  minimize the mean mechanical energy stored in the rod during {its} motion. We replace the {classical} wave equation with a variational problem solved via traveling waves defined on a special {time-space}  mesh. {For  a uniform rod,} the shortest admissible time horizon is estimated {exactly}, and the {exact} optimal control law is symbolically found in a {recurrent way.} 
\end{abstract}
\keywords{Optimal Control \and Dynamics \and Elasticity \and Distributed and Lumped Parameters \and Wave Equation \and Traveling Waves}


\section{Introduction}

Such classical mechanical systems as rods and strings as well as  related control problems have been attracting attention of mathematicians and engineers  for many years. This is not surprising taking into account how many physical processes are modeled by or simplified to the wave equation \cite{Krabs:1995}. As well as for other distributed parameter systems, a possible solution to a control problem for the wave equation over a finite horizon involves boundary and distributed control inputs \cite{Lions:1971, Butkovsky:1969}.

The boundary control seems to be more {feasible 
in mechanical systems}  since {its 
 realization} employs {actuators} 
 widely  used in engineering. However, it has certain limitations because a finite number of inputs is used to control the continuum system of partial differential equations (PDEs). {For 
  vibrating} systems {like} rods and strings such a limitation is a minimal control time, so that the system cannot be transferred to a {desired} 
   state  quicker than this critical
    time even by means of unconstrained control \cite{Butkovsky:1969, KostinGavrikovJCSSI:2021}. For more general vibrating systems, {e.g.} 
      with memory, it may even lead to  uncontrollability \cite{Romanov:2016}. 
{The distributed} control gives {some} 
 advantages since a continuum input provides means to {control} 
  each vibrating mode separately. Ideally, we are able to instantly transfer a system to a desired state \cite{Chen:1981}. To this end, the spectral theory of linear operators~\cite{Banks:1983, Curtain:1995}, the decomposition based on the Fourier method~\cite{Chernousko:1996}, the selection method~\cite{Gerdts:2008} as well as many other approaches are utilized. 

{ However, the exact controllability} 
{may be not possible for many real-world} implementations  of  dynamical systems  with distributed parameters \cite{Glowinski:1994} since a discrete numerical solution may develop {singularities.  
Usually, one has to discretize a distributed control input} first and only then {one} may apply it  to a mechanical system. This leads to a question: are such  finite-dimensional inputs derived from a continuum control law indeed optimal in the class of finite-dimensional control functions? {If one does not take this issue into account, one} may propose a spatial discretization of {an optimal  input} such that {a} rod (as well as more general vibrating systems) becomes uncontrollable over a fixed  time horizon \cite{Zuazua:2005}, especially if the control is applied only  along some part of the length \cite{Ho:1990}.  {Moreover, 
a} numerical solution to an optimal control problem (OCP) {might} be discretized in time also, and the same question of optimality as for spatial discretization  arises. 
 This issue also attracts a lot of attention \cite{Lagnese:2003, Heinkenschloss:2005, Liu:2020, Kroner:2011}.   We do not consider {such a discretization} in this paper assuming that at least polynomial in time 
 signals may be implemented with a good precision. 

In our study, we assume from the beginning that the control inputs are spatially finite-dimensional: boundary forces are applied at the {rod ends} and a piecewise {constant  force} is distributed along {the central line}. Such a force can be implemented with the {help of  piezoelectric} actuators or other control elements placed along {the entire  rod}.   Thus,  {our control} is discrete in space, although the piezoelectric stress  of each actuator itself is distributed uniformly on the corresponding subinterval and is varied in time.  
{For simplicity,} we do not consider a detailed model of the actuators and understand the applied forces as control inputs,   as well as we suppose  
that there are no {gaps} between control elements. Since piezoelectric actuators are widely used in applications \cite{Tzou:2019}, including in series utilization \cite{Mu:2019}, well-developed models, e.g. \cite{IEEEStandard:1988},  may be exploited to implement the proposed control approach as in \cite{Kumar:2008, Li:2017}. 

The assumption that the input is piecewise constant in space allows for splitting the 
 controlled system into 
  interconnected subsystems (cf.  \cite{Rice:2009, Massioni:2009}), each of which is described through   traveling waves  and is actuated by {one} 
control element. The continuity and boundary conditions   interweave algebraically these traveling waves. By using {a 
 mesh} on the time-space domain (cf. time decomposition in \cite{Lagnese:2003}), we 
 express all the conditions {as} 
a linear system, which solvability guarantees controllability of the dynamical system whereas unsolvability conditions provide  the critical time horizon.

In what follows, this splitting into subsystems is applied to a variational formulation of the original boundary value problem (BVP). The approach we use is based on the method of integro-differential relations (MIDR)~\cite{Kostin:2018}.  It has been developed for description of dynamics and control {of 
 elastic} systems {which  involves} the {Ritz and} FEM-type approximations \cite{Kostin:2018b, Kostin:2020a}. Additionally to displacements, a dynamic variable (so-called potential) is introduced, which {binds together} the momentum density and normal forces in the cross section.  The local constitutive relations  are {replaced with} a functional characterizing how well these relations  are satisfied in terms of kinematic and dynamic {variables. 
 This} constitutive functional is subject to boundary constraints  and continuity conditions on the interfaces between controlled subsystems and equals to zero on the exact solution expressed via traveling waves. The variational formulation gives certain advantages providing required smoothness of the solution and its traces without a priori assumptions as well as avoiding the employment of derivatives of delta-functions, {which} usually { 
  represent} piezoelectric forces, c.f.  \cite{Kucuk:2014}.

Next, we consider the OCP of minimizing  the mean mechanical energy stored by the rod during its motion. By utilizing the {d’Alembert’s} description, the control problem is reduced to a classical variational problem. The resulting Euler--Lagrange ordinary differential equations (ODEs) 
 together with appropriate boundary conditions constitutes a BVP, which solution provides the optimal control signals and the corresponding rod motion. Although our approach leads to the spatial discretization of the system, it differs from the standard discretization techniques such as finite element, volume, and difference methods~\cite{Balas:1986, Christofides:2001, Lewis:2004} since the solution to {this 
  BVP} 
   exactly represents the state of the original PDE system. 

{In}   \cite{Kostin:2022b}, we considered a simplified version of the OCP, 
{in which} the time horizon is not arbitrary but is a multiple of the length of the control element. In this case, a time-space mesh is not so dense and the number of auxiliary traveling waves is less since the characteristics propagating from initial and terminal vertices coincide.

In this paper, a new generalized formulation of the initial-boundary value problem (IBVP) is given. 
Contrary to conventional one-variable  statement (in displacements only), we  introduce a variational formulation in two state variables.
For spatially homogeneous rod, the OCP is reduced to a one-dimensional variational problem. 
As a result, we obtain an exact optimal solution. That makes it possible   {to estimate 
 (i)} the admissible time for bringing the system to an arbitrary terminal state, {(ii)} the minimum energy cost for a given transition, as well as  {(iii)} limiting properties of the motion if the number of control inputs increases. While the explicit solution can be found  for a uniform rod only, the proposed variational formulation allows one to develop efficient numerical procedures for solving a wider class of optimization problems by exploiting, for example,  the finite element method.

The paper is organized as follows. In Sect.~\ref{sec:2}, we introduce the controlled system, give the variational formulation of the IBVP, and  state the OCP. 
The special mesh on the time-space domain is defined and the solution algorithm for the direct dynamic problem is described in Sect.~\ref{sec:3}. In Sect.~\ref{sec:4}, the OCP 
 is solved by using  auxiliary wave functions, and  a numerical example is presented. Conclusions 
 are given in Sect.~\ref{sec:5}.

\section{Statement of the Control Problem}\label{sec:2}


Let us consider longitudinal motions of a thin rectilinear elastic rod. 
Its length in the undeformed state is $2L$ (see the scheme in Fig.~\ref{fig:01}). 
The  $x$-axis is directed along the central line with the origin at the middle of the rod. 
\begin{figure}[t]
	\begin{center}
\includegraphics[width=0.8\linewidth]{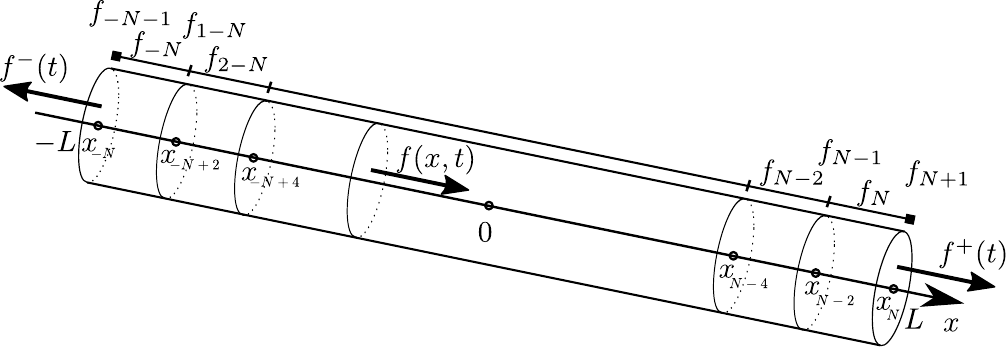}\\ 	
	\end{center}  
	\caption{Scheme of a rod with $N$ control elements.}
	\label{fig:01}
\end{figure}
The absolute displacements {of rod points} with the coordinate $x\in I_L:=(-L,L)$ at the time instant $t\in I_T:=(0,T)$ are given by a mapping $v:\Omega\rightarrow\mathbb{R}$, where $\Omega=I_T\times I_L$ is the time-space {domain. 
We} are also interested {in 
 the} linear momentum density $p:\Omega\rightarrow\mathbb{R}$ as well as the total force  $s:\Omega\rightarrow\mathbb{R}$ normal to the rod's cross {section.     
Mechanical} properties of the system are defined {by the  tension} stiffness $\kappa:I_L\rightarrow\mathbb{R}$ and the linear mass density $\rho:I_L\rightarrow\mathbb{R}$.
The rod is loaded {by external} normal forces $f^{\pm}:I_T\rightarrow\mathbb{R}$ applied at the ends with the coordinates $x=\pm L$.
Additionally to the elastic force $\kappa(x)v_x(t,x)$, an inner force $f:\Omega\rightarrow\mathbb{R}$, which is also normal to the cross section, stretches or contracts the rod along the $x$-axis. 
The subscripts $t$ and $x$ denote  the partial derivatives in time and space, respectively.

\subsection{Generalized statement of the IBVP}\label{sub:0202}

The key idea of the proposed approaches is that {the  state} variables of a physical state can always be  divided into two {groups: 
kinematic quantities (displacements, strains, velocities, temperature) and 
dynamic values (stresses, momenta, heat fluxes).}
 At the same time, governing equations can be split into three types: (i) initial and boundary conditions, (ii) balance and continuity laws, and (iii) constitutive relations. The constitutive relations connect {kinematic and dynamic} 
 variables and contain information on material properties of the studied system. In generalized statements (e.g. Hamilton, Reisner Hu--Wasidzu, Hellinger--Reissner principles), one usually assumes that some of the governing equations are weakened; these are typically balance equations~\cite{Washizu:1982}. The essence of the MIDR~\cite{Kostin:2018} is that equations of the third type are represented in the integral form, whereas the other equations must be considered as essential constraints. An IBVP which is  modified in accordance with this idea can be reduced to the minimization of a non-negative functional over all admissible state variables.

The variational formulation of the IBVP under study 
with respect to two variables, kinematic $v$ and dynamic $r$, is described as follows:
\begin{problem}\label{prob:21}
\textit{Given the {a.e.}  
  positive coefficients $\kappa,\rho\in L^{\infty}(I_L)$, the initial distributions $v_0:I_L\rightarrow\mathbb{R}$ and $r_0:I_L\rightarrow\mathbb{R}$ in the Sobolev space $H^{1}(I_L)$, the boundary force integrals $u^{\pm}:I_T\rightarrow\mathbb{R}$ with $u^{\pm}\in H^{1}(I_T)$ and $u^{\pm}(0)=0$, as well as the normal force $f:\Omega\rightarrow\mathbb{R}$ with $f\in L^{2}(\Omega)$,
find such functions $v^{*}(t,x)$ and $r^{*}(t,x)$ in $H^{1}(\Omega)$
that minimize  the constitutive functional}
	\begin{equation} \label{eq:207:constitutive_minimization}
		\begin{array}{c}\ds
			Q[v^{*},r^{*}] 
				= \min\nolimits_{v,r\in H^1(\Omega)}Q[v,r]=0,\quad
			Q = \int\nolimits_{\Omega}q\,\mathrm{d}\Omega\geq 0,
		\\
			q:=\frac14\big(g^2 + h^2\big),\quad\ds
		     g: = \sqrt{\rho}v_{t} - \frac{r_x}{\sqrt{\rho}}, \quad
		      h: = \sqrt{\kappa}v_{x} - \frac{r_t-f}{\sqrt{\kappa}},
		\end{array}
	\end{equation}
\textit{subject to the initial and boundary constraints}
      \begin{equation} \label{eq:202:initial_conditions}
      \begin{array}{c} 
			v(0,x) = v_{0}(x),\quad
			r(0,x) = r_{0}(x),\quad
			x \in I_L,\\
        	r(t,-L) = r_{0}(-L) + u^{-}(t),\quad
        	r(t,L) = r_{0}(L) + u^{+}(t),\quad
        	 t \in I_T.
      \end{array}        	 
	\end{equation}	
\end{problem}
Here, {the 
 functional} $Q$ reaches its absolute minimum on the {exact} solution. At that, the constitutive residual function { $q$ }
is equal to zero almost {everywhere in  $\Omega$. 
 The scaling} in~\eqref{eq:207:constitutive_minimization} is done so that {$Q$} 
has the dimension of {action. 
In numerics,}  
the nonzero value of $Q>0$ estimates a posteriori the integral error of an approximate solution, whereas the integrand $q$ {can  
 estimate} the local quality of the approximation.

Embedding the one-dimensional functions $r_0(x)$, $u^{\pm}(t)$ from the Hilbert space $H^1$ into the space of continuous functions ($H^1\subset C^0$) according to the Sobolev lemma 
\cite{Yoshida:1965} means that these functions can be continued respectively to the close interval $[-L,L]$, $[0,T]$
and their values must meet at points $x = \pm L$, $t = 0$ according to~\eqref{eq:202:initial_conditions}.

	\subsection{Control Forces, Force Jumps and Integrals}\label{sub:0203}

In Problem~\ref{prob:21}, the first weak {derivatives} of the functions $u^{\pm}$ define the boundary forces $f^{\pm}=(u^{\pm})'\in L^2(I_T)$.
In what follows, we rename these mapping as $u_{\pm N \pm 1}:=u^{\pm}$ so that $f_{\pm N \pm 1}:=f^{\pm}$ and consider {as lumped control inputs.}

The normal force $f$ is taken as a distributed control input.
We assume that  the function $f(t,x)$ is piecewise constant in space. 
No external linear force density, {e.g.}  a gravitational load, is applied along the $x$-axis. 
From a technical point of view, such a load {can be generated 
 by a set of piezoelectrical actuators attached on the rod's side surface.} 
On a given segment of the rod, the actuators should work symmetrically with respect to the $x$-axis to avoid bending {deformations. 
A} group of actuators together with the adjacent piece of the rod is named further a control element.
{These} $N$ elements have equal {lengths} and are  inseparably located along the central line: there {are no gaps} between adjacent control elements.
Moreover, electromechanical properties of all the elements are equivalent. 
It is also assumed {as for} 
 the simplest mathematical models, e.g. \cite{IEEEStandard:1988}, that the  {force  $f(t,x)$}  is constant along an element and {can be produced without constraints by}
the element's actuators. 
Functional restrictions on $f$ following 
 from Problem~\ref{prob:21} are discussed below.

{Applications} of 
systems that employ finite-dimensional distributed inputs usually involve elastic objects and piezoactuators/sensors. {These  systems } are sometimes referred as smart structures \cite{Chopra:2002}. They are also called phononic crystals {if they} 
 consist of identical sub-structures \cite{Li:2017}. Such {structures are} used for active
 and passive vibration damping \cite{Shengbing:2012, Lossouarn:2015},
 frequency filtration \cite{Degraeve:2014},  etc.
 {Although much more general (3D motion involving also friction, bending,
 and subject to finite deformation), close control problems arise in soft robotics 
 for peristaltic locomotion when an elongated elastic body made from periodic segments crawls 
 due longitudinal contraction/extension of segments caused by magnetic fields, pneumo- and servomotors, etc. \cite{Omori:2008, Guglielmino:2010, Seok:2013}.}

The control elements are naturally related to $N$ space {intervals:} 
	\begin{equation} \label{eq:203:rod_segments}
			x \in I^{x}_{k} := (x_{k-1},x_{k+1}),\quad
			k \in J_s;\quad
			x_{n} =\frac{n\lambda}{2},\quad
			n \in J_x,\quad 			
			x_{\pm N} = \pm L,
	\end{equation} 
where {$\lambda = 2L/N$} denotes the length of each element. 
The two sets of indices 
	\begin{equation} \label{eq:203:space_indexing_sets}
			J_{s} = \{ 1-N, 3-N,\ldots,N-1 \},\;\;
			J_{x} = \{ -N, 2-N,\ldots,N \}
	\end{equation} 
{in~\eqref{eq:203:rod_segments}  label} respectively the space intervals $I^{x}_{k}$ and the interface points $x_{n}$. 

{The  function} $f$ over each element does not depend on the space coordinate $x$, that is  {$f(t,x) = f_{k}(t),$ with $x \in I^{x}_{k},$ $k \in J_s,$ $t \in I_T.$ } 
Besides these {piezoelectric  forces,} the external loads $f_{-N-1}$ and $f_{N+1}$ complete a set of control functions $f_{k}\in L^2(I_T)$ with the indices  $k\in J_c$. Here, the supplemented index set 
		{$J_{c} = J_{s}\cup\{-N-1, N+1\}$} 
is related to the set of control inputs (see Fig.~\ref{fig:01}).


The variation of {the 
integral $Q$} in \eqref{eq:207:constitutive_minimization} subject to the boundary conditions \eqref{eq:202:initial_conditions} 
 shows that the jumps of  derivatives $[v_x(t,x_{n})]$ at {the 
 	points} $x_{n}$ and their boundary values  $v_x(t,x_{\pm N})$ depend exclusively on the differences of adjacent control functions 
      \begin{equation} \label{eq:204:jump_force}
       	 f_{n}:=f_{n+1}-f_{n-1},\quad
      	 n \in J_x.
      \end{equation}
The number of such functions is one less than the number of the original inputs $f_{k}$ {with $k \in J_c$. }
When the same control force $\bar{f}(t)$ acts  for each  $k \in J_c$ in the governing equations~{\eqref{eq:207:constitutive_minimization}--\eqref{eq:202:initial_conditions}}
 (in other words,  $f_{k}= f_{k+2}$ for $k \in J_c \setminus \{ N + 1 \}$), 
the {particular} solution $v(t,x)=0$, $r(t,x) = \int_{0}^{t} \bar{f}(\tau)d\tau$ appears, and
the rod  moves as if it is free of any loads. 
This means that {the 
	displacements} $v$ do not depend on the sum of the control signals $f_{k}(t)$. 
This value $\bar{f}(t)$ affects only the intensity of residual {stresses 
	$s$,} which do not influence the rod's mean energy minimized in the OCP considered in {Subsec.~\ref{sub:0205}. }
For definiteness, we reduce these stresses by zeroing the sum:
      \begin{equation} \label{eq:204:zero_force_sum}
       	 \frac{1}{N+2}\sum_{k \in J_c} f_{k}(t) = \bar{f}(t) = 0, \quad
       	 {t\in I_T}.
      \end{equation}

{For 
 convenience,} let us define two vector spaces with elements $\boldsymbol{f}_{c}: {I_T} \rightarrow \mathbb{R}^{N+2}$  and $\boldsymbol{f}_{x}: {I_T} \rightarrow \mathbb{R}^{N+1}$ that are respectively  a $(N + 2)$-tuple and  a $(N + 1)$-tuple of time-dependent functions according to 
      \begin{equation} \label{eq:204:force_vectors}
{\boldsymbol{f}_c := (f_{-N-1}, f_{-N+1},\ldots,  f_{N+1}),\quad
			\boldsymbol{f}_x := (f_{-N}, f_{-N+2},\ldots,  f_{N}).}
      \end{equation}
Here,  $\boldsymbol{f}_{c}$ contains both boundary and distributed control inputs, whereas $\boldsymbol{f}_{x}$ groups the control jumps~\eqref{eq:204:jump_force}.   
Given the control vector $\boldsymbol{f}_{x}$, {the linear system 
~\eqref{eq:204:jump_force}, \eqref{eq:204:zero_force_sum}} can be resolved with respect to the entries of $\boldsymbol{f}_{c}$. As a result, the control vector-valued function $\boldsymbol{f}_c(t)$ 
 is expressed through
the vector of the control jumps $\boldsymbol{f}_x(t)$ by
      \begin{equation} \label{eq:204:control_relations}
{\boldsymbol{f}_{c}(t)  =\boldsymbol{F}^{-1}{\boldsymbol{\hat f}}_{x}(t),\quad      
{\boldsymbol{\hat f}}_{x} = (\boldsymbol{f}_x, 0),\quad      \boldsymbol{F}\in \mathbb{R}^{(N+2) \times (N+2)}.}
      \end{equation}

The force integrals are also introduced according to
      \begin{equation} \label{eq:206:control_integrals}
          u_{k}(t)=\int_{0}^{t} f_{k}(\tau)\,\mathrm{d}\tau, \quad
          k \in J_c,\quad
          t \in {I_T}.
      \end{equation}
By taking into account~\eqref{eq:204:force_vectors}, the linear algebraic constraint 
	{$\sum_{k \in J_c} u_{k}(t) = 0,$ $t \in I_T,$}
is imposed on these functions. 
Similarly to~\eqref{eq:204:jump_force} and in agreement with~\eqref{eq:206:control_integrals}, we can also define the jumps of control integrals 
       	 {$u_{n}(t)=u_{n+1}(t)-u_{n-1}(t),$ $n \in J_x.$}
 In accordance with~\eqref{eq:204:force_vectors}, two vector-valued functions $\boldsymbol{u}_{c}:{I_T}\rightarrow \mathbb{R}^{N+2}$ and $\boldsymbol{u}:{I_T}\rightarrow \mathbb{R}^{N+1}$ {are introduced such that $\boldsymbol{u}'_c = \boldsymbol{f}_c,$ $\boldsymbol{u}' = \boldsymbol{f}_x$, $\boldsymbol{u}_c=(u_k)_{k\in\boldsymbol{J}_c}$, $\boldsymbol{u}=(u_n)_{n\in\boldsymbol{J}_x}$.}  
 {We consider in the sequel the entries of $\boldsymbol{u}$ as control inputs for Problem~\ref{prob:21}.} 

		\subsection{Relation to the classical wave equation}\label{sub:0205}

 Problem~\ref{prob:21} as a generalized formulation must {admit} a classical solution. 
To show that, let us suppose that $\kappa$ and $\rho$ are continuous functions.
The {first PDE} governing the rod's {motion 
links} the momentum $p$ and the force $s$ according to Newton's second law {as follows} 
	\begin{equation} \label{eq:204:Newton_law}
		p_{t}(t,x) = s_{x}(t,x).
	\end{equation}
We define the  dynamic potential  $r$ such that
    \begin{equation} \label{eq:205:potential_relations}
        p(t,x) = r_x(t,x), \quad
        s(t,x) = r_t(t,x).
    \end{equation}
This representation {of 
 $p$ and 
  $s$} satisfies automatically the balance equation~\eqref{eq:204:Newton_law} if the second derivatives of 
   $r$ exist. 
The equality of the functional $Q$ to zero in the case of piecewise smooth functions $v$ and $r$ with accounting for~\eqref{eq:205:potential_relations} means that
	\begin{equation*} 
	\begin{array}{c}\ds
        	g = \sqrt{\rho}v_{t} - \frac{r_x}{\sqrt{\rho}}=\sqrt{\rho}v_{t} - \frac{p}{\sqrt{\rho}} =0, \quad
        	h = \sqrt{\kappa}v_{x} - \frac{r_t-f}{\sqrt{\kappa}}=\sqrt{\kappa}v_{x} - \frac{s-f}{\sqrt{\kappa}} = 0.
	\end{array}
	\end{equation*}
This leads to two local constitutive relations
      \begin{equation} \label{eq:204:constitutive_law}
         	p(t,x) = \rho(x)v_t(t,x), \quad 
         	s(t,x) = \kappa(x)v_x(t,x) + f(t,x)
      \end{equation}
between the momentum $p$ and the velocity $v_t$ as well as
between the forces $s$ and the longitudinal strains $v_x$ (Hooke's law).
Substituting the expressions for $p$ and $s$ from~\eqref{eq:204:constitutive_law}  in~\eqref{eq:204:Newton_law} and taking into account that $f$ is a piecewise constant function of $x$, we recover the wave equation
		{$v_{tt}(t,x) = v_{xx}(t,x),$ 
		$(t,x) \in \Omega.$}

After differentiating the second equation in~\eqref{eq:202:initial_conditions} w.r.t $x$ and extracting the velocity $v_t$ of the rod's points from~\eqref{eq:204:constitutive_law} and~\eqref{eq:205:potential_relations},  initial conditions are imposed
on both the displacements $v$ and the velocity $v_t$ by
      \begin{equation} \label{eq:204:initial_distributions}
          v(0,x) = v_{0}(x),\quad
          v_t(0,x) = r'_{0}(x)/\rho(x).
      \end{equation}
Additionally,  inhomogeneous  boundary conditions of second kind are defined by differentiating~\eqref{eq:202:initial_conditions} w.r.t. $t$ and accounting for~\eqref{eq:204:constitutive_law} as follows
      \begin{equation} \label{eq:204:boundary_distributions}
        	\kappa(\pm L)v_x(t, \pm L) = f_{\pm N \pm 1}(t) - f_{\pm N \mp 1}(t).
      \end{equation}
      
Finally,  continuity conditions for displacements $v$ and forces $s$ 
	\begin{equation} \label{eq:204:interelement_distributions}
	\begin{array}{c}\ds
       	 [v(t,x_{n})] = 0, \quad
      	 n\in J_{x} \setminus \{ -N, N \};\quad
     	      [\kappa(x_{n})v_x(t,x_{n})] =  f_{n+1}(t) - f_{n-1}(t)
	\end{array}
	\end{equation}
must be {imposed 
at 
points $x_n$ with $[F (x_n)]:=F(x_n+0)- F(x_n-0)$.}  
The first equation in~\eqref{eq:204:interelement_distributions} arises due to rod integrity, while the second one follows from Newton's third law and defines the interface force balance.  
For shortness,  the relations~\eqref{eq:204:initial_distributions}--\eqref{eq:204:interelement_distributions} are called  the interface conditions.

		\subsection{Optimal Control Problem}\label{sub:208}

The following OCP  
is considered.
\begin{problem}\label{prob:22}
\textit{Find the control vector-valued function $\boldsymbol{u}^{*}\in H^1(I_T;\mathbb{R}^{N+1})$ and the terminal constant $c^*_1$ such that the mean mechanical energy $E$  stored in the rod over the fixed time interval $I_T$ reaches its minimum}   
	\begin{equation*} 
		\begin{array}{c}\ds
			E[v,r,\boldsymbol{u}]\to \min\nolimits_{\boldsymbol{u},c_1},\quad
      		E = \frac{1}{T}\int\nolimits_{\Omega}e \,\mathrm{d}\Omega, \quad 
        		{e:}=\frac{\rho v_{t}^{2}}{4}+\frac{\kappa v_{x}^{2}}{4}
        			+\frac{(r_{t}-f)^{2}}{4\kappa}+\frac{r_{x}^{2}}{4\rho} ,
		\end{array}
	\end{equation*}
\textit{subject to the integral equality $Q[v,r,\boldsymbol{u}]=0$ from~\eqref{eq:207:constitutive_minimization},
the initial and boundary conditions~\eqref{eq:202:initial_conditions} 
 (see Problem~\ref{prob:21}), 
as well as the terminal conditions}
    \begin{equation*} 
		\begin{array}{c}\ds
        		v(T,x)=v_{1}(x),\quad
        		r(T,x)=r_{1}(x)
			=\int\nolimits_{-L}^{x}p_{1}(\xi)\,\mathrm{d}\xi+c_{1},
			\\ \ds
        		v_{1}\in H^1(I_L),\quad
        		p_{1}\in L^2(I_L),\quad
        		x\in  \bar{I}_L.
		\end{array}
	\end{equation*}
\end{problem}	
{The energy} $E$ depends on the control variable $\boldsymbol{u}$ through the control function $f$ as well as the problem constraints. 
The mapping $e:\Omega \rightarrow \mathbb{R}$ denotes the linear energy density.
The desired functions of displacements $v_{1}:\bar{I}_L\rightarrow\Omega$ and momentum density $p_{1}:\bar{I}_L\rightarrow\Omega$ completely define the terminal state of the elastic rod, whereas the parameter $c_{1}$ 
does not influence this state.

In {Sect.~\ref{sec:3}}, {the} 
 exact solution of the direct dynamic {problem 
is found assuming} that the control function {$\boldsymbol{u}(t)$} is given and the rod is homogeneous. In {Sect.~\ref{sec:4}}, we present a solution algorithm 
reducing the {OCP} 
 in two-dimensional time-space domain (Problem~\ref{prob:22}) to {a} one-dimensional variational problem. {Whereas} 
  usually a solution to an OPC for PDEs in general and {for the} wave equation in particular can be obtained only approximately, for example, by means of Fourier  \cite{Butkovsky:1969} or finite difference \cite{Ervedoza:2013}  methods, we present a way to explicitly derive an analytical solution. Since we focus on a continuous system with  finite-dimensional control inputs while solving the OCP  rigorously, such a solution may serve as a benchmark in both theoretical and engineering studies employing distributed loads.

\section{{Solution to the Direct Dynamic Problem for a Uniform Rod}}\label{sec:3}

In what follows, we constrain ourselves to a particular case of a uniform elastic rod, which mechanical parameters $\rho (x) = \mathrm{const}$ and $\kappa(x) = \mathrm{const}$ do not depend on the spatial coordinates. 
{For simplicity,} 
we introduce dimensionless variables according to	{$v= v^{*},$ $r=\kappa \tau_{*}r^{*},$ $x=L x^{*},$ $t=\tau_{*} t^{*},$ $\tau^2_{*}=L^2\rho/\kappa.$} 
The star superscript is further omitted. After this transformation, the length of the rod is equal to 2, $I_L=[-1,1],$ whereas the length of each element is $\lambda = 2/N$.
{Problem~\ref{prob:22} is  reformulated as follows}
\begin{problem}\label{prob:31}
\textit{Find the control function $\boldsymbol{u}^{*}\in H^1(I_T;\mathbb{R}^{N+1})$ and the  constant $c^*_1$ such that}
	\begin{equation}\label{eq:301:homogeneous_control_problem} 
			E[v,r,\boldsymbol{u}]\to\min\nolimits_{\boldsymbol{u},c_{1}},\quad v,r \in H^1(\Omega),
      \end{equation}
\textit{subject to the following constraints} 
	\begin{equation}\label{eq:301:boundary_value_problem} 
       	\begin{array}{c}\ds
			Q[v,r,\boldsymbol{u}]=0;\quad
			v(0,x) = v_{0}(x),\quad
			r(0,x) = r_{0}(x),
		\\ \ds
        	v(T,x)=v_{1}(x), \quad
        	r(T,x)=r_{1}(t), \quad
			x \in  \bar{I}_L;
		\\ \ds
			[v(t,x_{n})] = [r(t,x_{n})] = 0,	\quad
			n \in J_{x} \setminus \{ -N, N \}, 
		\\ \ds
			r(t,\pm 1) = r_{0}(\pm 1) + u_{\pm N \pm 1}(t),\quad
			t \in {I_T}.
       	\end{array}
      \end{equation}
\textit{Here, the functionals $E$ and $Q$ take the form}
	\begin{equation}\label{eq:301:homogeneous_ functionals} 
       	\begin{array}{c}\ds
      		E = \frac{1}{T}\int\nolimits_{\Omega}e \,\mathrm{d}\Omega, \quad
        		e=\frac{v_{t}^{2}}{4}+\frac{v_{x}^{2}}{4}
				+\frac{(r_{t}-f)^{2}}{4}+\frac{r_{x}^{2}}{4}, 
		\\[2ex]
		\ds
			Q = \int\nolimits_{\Omega}q\,\mathrm{d}\Omega, \quad
		      q=e - \frac{v_{t} r_{x}}{2} - \frac{v_{x}(r_{t}-f)}{2}. 
       	\end{array}
      \end{equation}
\end{problem}

		\subsection{Representation of the Solution in d'Alembert's Form}\label{sub:302}
		
To analyze {the 
	dynamics} of a uniform rod described by~\eqref{eq:301:homogeneous_control_problem} and~\eqref{eq:301:boundary_value_problem}, a traveling wave representation {of 
	$v, r$} in d'Alembert's form is applied. 
We assume that on each subdomain $\Omega_{k}={I_T}\times I^{x}_{k} \subset \Omega$ with $k \in J_{s}$, {the unknown 
	variables} are represented as
	\begin{equation} \label{eq:302:dAlembert_solutions}
		\begin{array}{l}\ds
			v(t,x) = w^{+}_{k}(t+x) + w^{-}_{k}(t-x),\quad
			r(t,x) = w^{+}_{k}(t+x) - w^{-}_{k}(t-x)  + u_{k}(t),
		\end{array}
	\end{equation}  
where {left ($w^+_k$) and right ($w^-_k$)} traveling waves
{$w^{\pm}_{k}: I^{\pm}_{k} \rightarrow \mathbb{R},$ $w^{\pm}_{k}\in C(I^{\pm}_{k}),$} 
are introduced with the domains
	\begin{equation} \label{eq:302:dAlembert_domains} 
			I^{\pm}_{k} = \left( z^{\pm}_{k}, z^{\pm}_{k} + \lambda + T \right),\quad
			z^{+}_{k} = \frac{k-1}{2}\lambda,\quad
			z^{-}_{k} = -\frac{k+1}{2}\lambda.
	\end{equation}  
Each interval $I^{\pm}_{k}$ for $k \in J_s$ is defined through the infimum and the supremum of the arguments $t\pm x$ over $\Omega_{k}$. 
Due to the symmetry of the rod with respect to the origin point $x=0$, these intervals relate  as $I^{\pm}_{k}=I^{\mp}_{-k}$. 	
Further, we show that representation {\eqref{eq:302:dAlembert_solutions} 
 is} valid by explicitly resolving equations arising due to initial, terminal, boundary and interelement conditions defined in \eqref{eq:301:boundary_value_problem}.		

		
A rather convenient  coordinate representation of the traveling waves {$w^{\pm}_{k}$ 
 with} $k\in J_s$  is given in the coordinate frame $(z^{+}, z^{-})$ rotated counter-clockwise on the angle $\frac{\pi}{4}$ with respect to the frame $(t, x)$.
	The direct and inverse transformations of the new and old coordinates have the form
	\begin{equation} \label{eq:303:coordinate_transformation} 
		\begin{array}{c}
			z^{+} = t + x,\quad
			z^{-} = t - x;\quad
			t = \frac12 z^{+} + \frac12 z^{-},\quad
			x =\frac12 z^{+} - \frac12 z^{-}.
		\end{array}
	\end{equation}  
	In Fig.~\ref{fig:02}, the $z^{\pm}$-axes are presented by blue solid lines, the orts of the new frame
$\boldsymbol{i}^{\pm}$ with the coordinates   $ \langle \frac{1}{2},\pm\frac{1}{2}\rangle$ in the old coordinate system are depicted  by two blue arrows. 
All characteristic lines in the domain $\Omega$ can be set by the equations $z^{\pm} = \mathrm{const}$. {In the new coordinates \eqref{eq:303:coordinate_transformation} the traveling waves $w^{\pm}_{k}$ with $k \in J_s$ depend respectively on the only argument $z^{\pm}$.} 
	
		\subsection{Mesh on the Time-Space Domain}\label{sub:304}
		
{Although the variables $v,r$ of the OCP~\eqref{eq:301:homogeneous_control_problem}, \eqref{eq:301:boundary_value_problem}, which are represented 
by~\eqref{eq:302:dAlembert_solutions}, 
 satisfy the integral constraint $Q=0$,} the solution has been given so far only for the union of disjoint open subdomains $\bigcup_{k \in J_{s}} \Omega_{k}$, but not on its  closure $\overline{\Omega} = \bigcup_{k \in J_{s}} \overline{\Omega}_{k}$. 
By taking into account the initial and terminal constraints (at $t=0, T$) in~\eqref{eq:301:boundary_value_problem} as well as boundary and interelement  constraints (at $x=x_{n}$ with $n\in J_x$), the solution $(v,r)$ has to be extended to the set of interface edges $\overline{\Omega}  \setminus \bigcup_{k \in J_s} \Omega_{k}$ of measure zero.  
The parts of this set are presented in Fig.~\ref{fig:02} for $N=4$ with thick {vertical and horizontal} lines.


		\begin{figure}[t]
			\begin{center}
\includegraphics[width=0.75\linewidth]{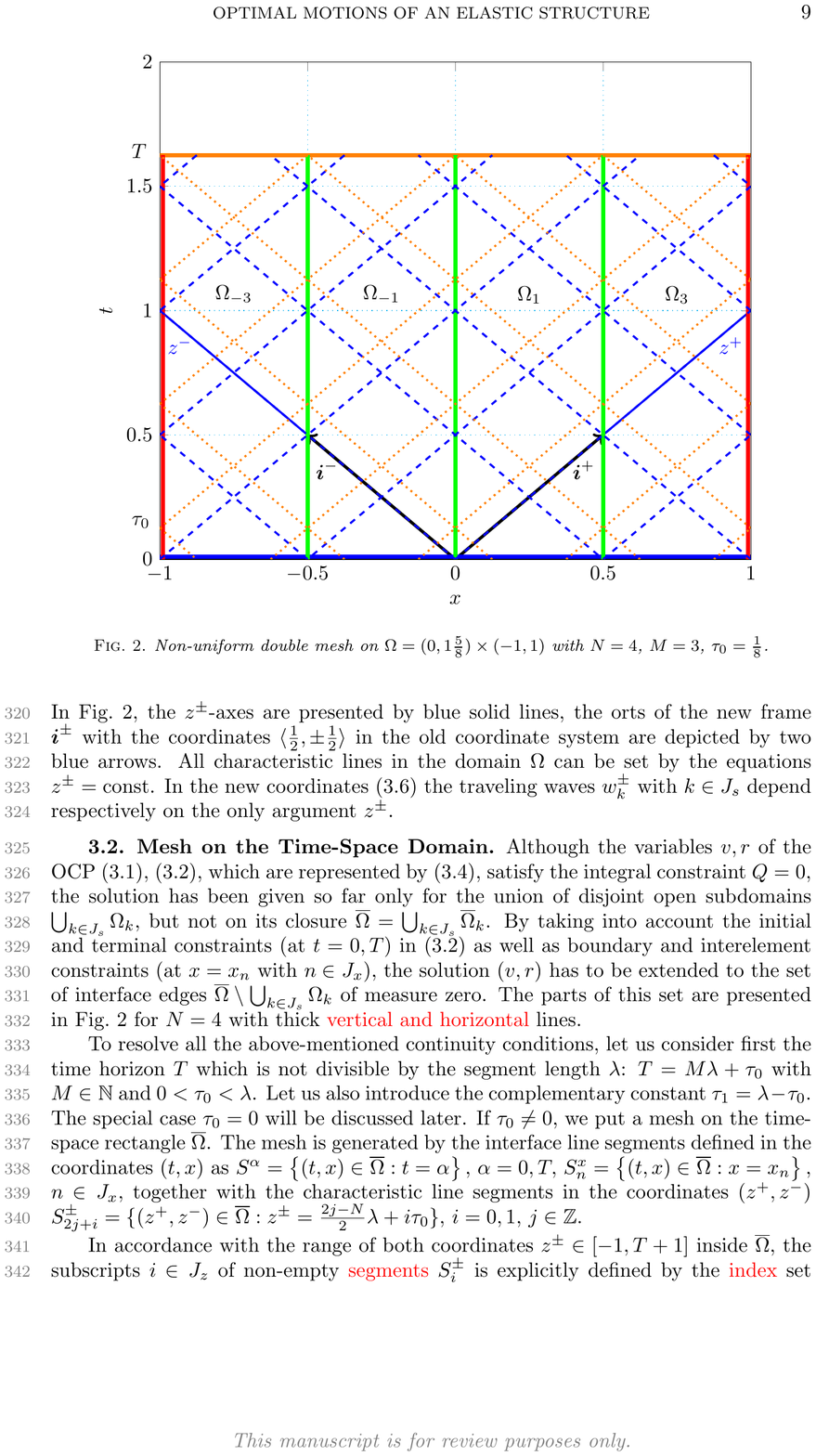}\\ 				
				\caption{Non-uniform double mesh on $\Omega=(0,1\frac58)\times(-1,1)$ with $N=4$, $M=3$, $\tau_{0}=\frac18$.}
				\label{fig:02}
			\end{center}
		\end{figure}

To resolve 
 the above-mentioned continuity conditions, let us consider first {the 
 time} horizon $T$ which is not divisible by the segment length $\lambda$: 
 $T=M\lambda+\tau_{0}$ with $M\in \mathbb{N}$ and $0<\tau_{0}<\lambda$. 
Let us also introduce the complementary constant $\tau_{1}=\lambda-\tau_{0}$. 
The special case  $\tau_{0}=0$ will be discussed later. 
{If} $\tau_{0} \ne 0$, we put a mesh on the time-space rectangle $\overline{\Omega}$. The mesh is generated by the interface line segments defined in the coordinates $(t, x)$ as $S^{\alpha} = \left\{(t,x)\in \overline{\Omega}: t=\alpha \right\},$ $\alpha=0,T,$ $S^{x}_{n} = \left\{(t,x)\in \overline{\Omega}: x=x_{n} \right\},$ $n\in J_x,$
 together with the characteristic line segments in the coordinates $(z^{+}, z^{-})$   $S^{\pm}_{2j+i} = \{(z^{+},z^{-})\in \overline{\Omega}: z^{\pm}=\frac{2j-N}{2}\lambda + i\tau_{0}\},$ $i=0,1,$ $j \in \mathbb{Z}.$
Since 
 both coordinates $z^{\pm} \in [-1, T+1]$ inside $\overline{\Omega}$, the  subscripts {$i\in J_z$} of non-empty {segments} $S^{\pm}_{i}$ are explicitly defined by the {index} set\linebreak $J_{z} = \left\{0,1,\ldots, 2M+2N+1 \right\}.$ 
In Fig.~\ref{fig:02}, the characteristic segments of the mesh with $M=3$, $N=4$, and $\tau_{0}=\frac18$ are depicted { by slanting 
	lines.}
At that, $\lambda=\frac12$ and { $T=1\frac58$. }
Similarly to the space intervals $I^{x}_{k}$ and points $x_{n}$ 
 in~\eqref{eq:203:rod_segments}, the duration intervals $I^{t}_{l}$ and the time instants $t_{m}$ generated by the mesh 
are given by 
	\begin{equation} \label{eq:305:time_intervals}
{		\begin{array}{c}\ds
			t \in I^{t}_{l} := (t_{l},t_{l+1}),\
			l \in J_{d}=J_{t}\setminus \{2M+1\},\
				J_{t} := \left\{ 0, 1,\ldots, 2M+1 \right\},
		\\ \ds
			t_{m} = j\lambda+i\tau_{0}\quad 
			(t_{2M+1} = T),\quad 
			i=0,1, \quad 
			2j+i=m,\quad m \in J_{t}.
		\end{array}		
}		
	\end{equation} 

		\subsection{Double Indexing of Traveling Waves and Control Functions}\label{sub:306}
		
Let us discuss how the solution $(v,r)$ of the direct dynamic problem~\eqref{eq:301:boundary_value_problem} can be extended over each closed subdomain $\overline\Omega_{k}$, where {$\Omega_{k} = (0,T)\times I^{x}_{k}$, $k\in J_s$} (see Fig.~\ref{fig:02} as an {example).
We} denote the edges parallel to the $x$-axis as 
	\begin{equation} \label{eq:306:time_edges}
{S^{\alpha}_{k} := \left\{(t,x)\in\overline\Omega: t=\alpha,\; x\in I^{x}_{k} \right\},	\quad \alpha=0,T, \quad 	    k \in J_s,}
	\end{equation} 
where $J_s$ is the set of segment indices introduced in~\eqref{eq:203:space_indexing_sets}.
The edges parallel to the $t$-axis are represented as follows 	
	\begin{equation} \label{eq:306:space_edges}
		\begin{array}{c}
			S^{x}_{n,m} := \left\{ (t,x)\in\overline\Omega: t\in I^{t}_{m},\; x = x_{n} \right\},\quad
			m\in J_{d},\quad
			n\in J_{x},
		\end{array}		
	\end{equation} 		
where the intervals  $I^{t}_{m}$ are given in~\eqref{eq:305:time_intervals}. 		
To resolve the interface constraints in~\eqref{eq:301:boundary_value_problem}, we need to satisfy the continuity conditions over each  edge $S^{0}_{k}$ and $S^{T}_{k}$  from~\eqref{eq:306:time_edges} as well as over each 
 edge $S^{x}_{n,m}$ 
 \eqref{eq:306:space_edges}.
To operate with the values of $w^{\pm}_{k}$ on each of these edges in $\overline{\Omega}_{k}$, we divide their domains $I^{\pm}_{k}$ introduced in~\eqref{eq:302:dAlembert_domains} into the open intervals  
	\begin{equation} \label{eq:306:wave_function_intervals}
			I^{\pm}_{k,m} = \left( z^{\pm}_{k,m},  z^{\pm}_{k,m+1} \right),\quad
			z^{\pm}_{k,m} = z^{\pm}_{k} + j\lambda+i\tau_{0},\quad
			i=0,1,
	\end{equation} 		
{where $m =2j+i\in J_{w}=J_{t}\cup\{2M+2\},$ the set} of indices {$J_{t}$} is introduced according to~\eqref{eq:305:time_intervals}, and the characteristic coordinate $z^{\pm}_{k}$ is given in~\eqref{eq:302:dAlembert_domains}.	

The new edge traveling waves $w^{\pm}_{k,m}$ are defined so that
	\begin{equation} \label{eq:306:edge_wave_functions}
			w^{\pm}_{k,m}(z) = w^{\pm}_{k}\left( z+z^{\pm}_{k,m} \right), \quad
			z\in (0,\tau_{i}),\quad
			i=0,1,
	\end{equation} 		
with $k \in J_{s},$ $m = 2j+i \in J_{w}.$  Here, the coordinate shifts $z^{\pm}_{k,m}$ are expressed in~\eqref{eq:306:wave_function_intervals}, {the  sets} $J_{s}$ and $J_{w}$ are introduced in~\eqref{eq:203:space_indexing_sets} and {after}~\eqref{eq:306:wave_function_intervals}, respectively. 

The similar procedure is also applied to all the control functions $u_{n}(t)$ given on $t\in[0,T]$. 
Each of them is split into $m$ edge maps {$u_{n,m}:$} 
	\begin{equation} \label{eq:306:edge_control_functions}
			u_{n,m}(z) = u_{n}\left( z + t_{m} \right), \quad
			z\in (0,\tau_{i}), \quad
			i=0,1,\quad
	\end{equation} 	
where {$m =2j+i \in J_{d},$ $n\in J_{x}\cup J_{c},$ and} the time instants $t_{m}$ are given in~\eqref{eq:305:time_intervals}.	


		\subsection{Continuity Conditions for the State Variables}\label{sub:307}

We consider first the edges $S^{0}_{k}$ of the segment $S^{0}$ defined  in~\eqref{eq:306:time_edges}.
The two initial conditions from~\eqref{eq:301:boundary_value_problem} expressed in d'Alembert's form~\eqref{eq:302:dAlembert_solutions} are resolved on { $S^{0}_{k}$ as}
	\begin{equation} \label{eq:307:edge_initial_conditions} 
			{w^{\pm}_{k,i}(z) = {\textstyle\frac{1}{2}} v_{0}(\pm z^{\pm}_{k}\pm  i\tau_{0}\pm z) \pm {\textstyle\frac{1}{2}} r_{0}(\pm z^{\pm}_{k} \pm i\tau_{0} \pm z), }
	\end{equation}  
{where 	$z\in (0,\tau_{i}),$ $i=0,1$ $k\in J_s$.} Similarly, the terminal conditions on the edge $S^{T}_{k}$ of the segment $S^{T}$ are resolved as	
	\begin{equation} \label{eq:307:edge_terminal_conditions} 
{w^{\pm}_{k,2M+1+i}(z) = 
				{\textstyle\frac{1}{2}} v_{1}(\pm z^{+}_{k}\pm i\tau_{1}\pm z) \pm {\textstyle\frac{1}{2}} r_{1}(\pm z^{\pm}_{k}\pm i\tau_{1} \pm z)}	
	\end{equation}  
{with $z\in (0,\tau_{1-i}),$ $i=0,1,$ $k\in J_s.$} 	
There are totally $8N$ relations associated with the initial and terminal edges in accordance with~\eqref{eq:307:edge_initial_conditions} and~\eqref{eq:307:edge_terminal_conditions}. 
	
The boundary conditions on the edges $S^{x}_{\pm N, m}$ are expressed as
	\begin{equation} \label{eq:307:edge_boundary_conditions} 
{w^{\pm}_{\pm 1 \mp N,m}(z) \mp w^{\mp}_{\pm 1 \mp N, m+2}(z) = \mp u_{\mp N, m}(z)+r_{0}(\mp 1), }	
	\end{equation}  
{where $z\in (0,\tau_{i}),$ $i=m\,\mathrm{mod}\, 2,$ $m\in J_{d}.$} The number of the edge boundary relations  is equal to $4M+2$.
The continuity conditions on the inner segments $S^{x}_{n, m}$ can be represented according to 
	\begin{equation} \label{eq:307:edge_interelement_conditions} 
		\left\{
		\begin{array}{l}\ds
			w^{+}_{n-1,m+2}(z) + w^{-}_{n-1,m}(z) -
			w^{+}_{n+1,m}(z) - w^{-}_{n+1,m+2}(z) = 0 
		\\ \ds
			w^{+}_{n-1,m+2}(z) - w^{-}_{n-1,m}(z) -
			w^{+}_{n+1,m}(z) + w^{-}_{n+1,m+2}(z)  = u_{n,m}(z)
		\end{array}\right.
		\\
	\end{equation}  
{with $z\in (0,\tau_{i})$ and $i=m\,\mathrm{mod}\, 2,$ $m \in J_{d},$ $n\in J_{x} \setminus \{-N,N\}.$}  There are $(4M+2)(N-1)$ equations related to these edges.

Altogether, the number of the edge constraints equals to $N_{e} = 4MN + 10N$. 
The system~\eqref{eq:307:edge_initial_conditions}--\eqref{eq:307:edge_interelement_conditions} 	
contains  $N_{v} = N_{w} + N_{u} = 6MN + 2M + 7N+1$ unknowns, 
where $N_{w} = 2(2M+3)N$ is the number of traveling wave functions $w^{\pm}_{k,m}$ and $N_{u} = (2M+1)(N+1)$ is the number of control jump functions $u_{n,m}$.
As a result, the number of variables for the double mesh exceeds the number of equations if  the value $MN$ is rather large.

		\subsection{{Solvability of the System of Constraints}}\label{sub:308}

The control { of an elastic rod with one piezoelement ($N=1$) is equivalent to the control of the rod exclusively by the external boundary forces $f^{\pm}$, and was described in~\cite{KostinGavrikovJCSSI:2021}}. 
Thus, only the  case $N>1$ is studied further. 
{Then for $M>1$ the following theorem holds.}

\begin{theorem}
 For any initial and terminal functions $v_{0},r_{0},v_{1},r_{1}\in H^{1}(I_{L})$ and a fixed time {horizon}  $T=\lambda M +\tau_0,$ $M>1,$ $\tau_0\neq 0$, {the set of solutions} $v,r \in H^{1}(\Omega)$ to the BVP~\eqref{eq:301:boundary_value_problem} {is nonempty}. The solutions are expressed algebraically in terms of the traveling waves $w^{\pm}_{k,m}$ and control maps $u^{\pm}_{n,m}$ introduced in~\eqref{eq:306:edge_wave_functions} and~\eqref{eq:306:edge_control_functions}.
\end{theorem}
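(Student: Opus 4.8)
The plan is to reduce the BVP~\eqref{eq:301:boundary_value_problem} to the linear system of edge relations~\eqref{eq:307:edge_initial_conditions}--\eqref{eq:307:edge_interelement_conditions} and to prove that this system is \emph{consistent} for arbitrary data, thereby exhibiting a nonempty solution set. Since the preceding text already shows that the number of unknowns $N_v$ exceeds the number of equations $N_e$, the genuine content is not a dimension count but surjectivity of the constraint operator: I must show that the prescribed $v_0,r_0,v_1,r_1$ always lie in its range. I would do this constructively, propagating the edge traveling waves $w^{\pm}_{k,m}$ along characteristics through the mesh and using the control maps $u_{n,m}$ as the slack that reconciles the two time ends. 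A key simplification is that all relations in~\eqref{eq:307:edge_initial_conditions}--\eqref{eq:307:edge_interelement_conditions} hold \emph{pointwise} in the local coordinate $z$, so the construction collapses to a single constant-coefficient finite-dimensional linear system to be solved for each $z\in(0,\tau_i)$; the $H^1$-regularity of $(v,r)$ is then inherited directly from that of the data.

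First I would dispose of the two mesh rows adjacent to $t=0$ and $t=T$. Relations~\eqref{eq:307:edge_initial_conditions} and~\eqref{eq:307:edge_terminal_conditions} are already solved: they give $w^{\pm}_{k,i}$ and $w^{\pm}_{k,2M+1+i}$, $i=0,1$, explicitly as half-sums and half-differences of $v_0,r_0$ and $v_1,r_1$. These carry no obstruction and pin down the boundary layers of the mesh.

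Next I would recast the remaining relations as causal recurrences in $m$. Solving the $2\times2$ interface system~\eqref{eq:307:edge_interelement_conditions} by adding and subtracting its rows yields transmission laws of the form $w^{-}_{n+1,m+2}=w^{-}_{n-1,m}+\tfrac12 u_{n,m}$ and $w^{+}_{n-1,m+2}=w^{+}_{n+1,m}+\tfrac12 u_{n,m}$: the right-going family $w^{-}$ passes from an element into its right neighbour and the left-going family $w^{+}$ into its left neighbour, each advancing two mesh rows. The boundary relations~\eqref{eq:307:edge_boundary_conditions} likewise reflect $w^{+}$ into $w^{-}$ at $x=-1$ and $w^{-}$ into $w^{+}$ at $x=+1$, up to $r_0(\mp1)$ and the end controls. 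Read with $m\mapsto m+2$, these are causal: starting from the initial rows $m=0,1$ they determine every interior wave as an affine function of the initial data and of the still-free controls $u_{n,m}$.

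The crux, and the step I expect to be the main obstacle, is to match the terminal rows. The forward recursion reaches the rows $m=2M+1,2M+2$ as affine expressions in the controls, while~\eqref{eq:307:edge_terminal_conditions} prescribes those same waves from $v_1,r_1$; equating them produces a linear system for $\boldsymbol u$ whose solvability is precisely the controllability content of the theorem. Because the coefficients are the fixed reflection and transmission constants, it suffices to prove once that this constant matrix has full row rank. I would establish this by a domain-of-influence argument on the mesh: $N>1$ guarantees at least one interior interface, so every element is acted on by a control, and $M>1$ guarantees enough mesh rows for the characteristics carrying those controls to sweep the whole rod and reach the terminal rows. Concretely, I would order the matching equations against a subset of controls so that the system becomes (block) triangular with nonvanishing pivots, equivalently verify that the adjoint homogeneous system forces all those controls to vanish; this yields surjectivity and hence a solution for every right-hand side. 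The leftover freedom, consistent with $N_v-N_e>0$, is the gauge fixed by the zero-sum normalization~\eqref{eq:204:zero_force_sum}, and solving the constant system pointwise in $z$ produces traveling waves and controls in $H^{1}$, so the solution set of~\eqref{eq:301:boundary_value_problem} is nonempty as claimed.
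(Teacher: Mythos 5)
Your reduction to the pointwise-in-$z$ linear system, the disposal of the rows $m=0,1$ and $m=2M+1,2M+2$ via \eqref{eq:307:edge_initial_conditions}--\eqref{eq:307:edge_terminal_conditions}, and the transmission/reflection laws obtained by adding and subtracting the rows of \eqref{eq:307:edge_interelement_conditions} are all correct, and up to that point you track the paper. From there, however, your route diverges: you propagate forward from $t=0$ and concentrate the entire theorem into a single claim --- that the constant matrix of the terminal matching system (forward-propagated waves at rows $2M+1,2M+2$ equated with \eqref{eq:307:edge_terminal_conditions}) has full row rank --- and that claim you never prove. ``Order the matching equations against a subset of controls so that the system becomes block triangular'' and the domain-of-influence remark are a plan, not an argument, and the plan is nontrivial precisely because of your own transmission laws: at an interface $n$ at row $m=2M-1$ or $2M$, both terminal waves $w^{+}_{n-1,m+2}$ and $w^{-}_{n+1,m+2}$ pick up the \emph{same} control $u_{n,m}$ with the same coefficient $\tfrac12$, so the difference of each matching pair is control-free at that row and telescopes down to lower rows; whether earlier-row controls can absorb these residual constraints is exactly where the hypothesis $M>1$ must enter, and for $M=1$ they cannot --- that is the paper's Theorem 2, eq.~\eqref{eq:309:unsatisfied_conditions}. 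Since the whole content of the statement lives in that rank assertion, the proposal as written has a genuine gap.

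For comparison, the paper sidesteps the rank question entirely by a different bookkeeping: instead of forward propagation plus terminal matching, it resolves the system bidirectionally (Steps I--V), assigning to each equation a designated pair of previously undetermined unknowns --- a control $u_{n,m}$ plus an outward-lying wave for the rows $m=0,1,2M-1,2M$, two outward-lying waves for the inner rows, and the central-element waves at $n=0$ when $N$ is even --- sweeping from both time ends and from the lateral boundaries toward the middle. Consistency then holds by construction for arbitrary data, with the middle-row controls and central-element waves left free; in that scheme $M>1$ is what keeps the row sets $\{0,1\}$ and $\{2M-1,2M\}$ disjoint, so no equation is ever used twice. A further, more minor, error in your last sentence: the leftover freedom is not ``the gauge fixed by the zero-sum normalization~\eqref{eq:204:zero_force_sum}''; that normalization only concerns recovering $\boldsymbol{f}_c$ from the jumps $\boldsymbol{f}_x$ and is independent of the surplus $N_v-N_e$, which the paper instead parametrizes by the free vectors $\boldsymbol{y}_i$ (middle-row control jumps and central-element waves) that are subsequently used as the optimization variables.
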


\begin{proof} Since the BVP \eqref{eq:301:boundary_value_problem} is equivalent to the linear system \eqref{eq:307:edge_initial_conditions}--\eqref{eq:307:edge_interelement_conditions} as shown in Subsect.~\ref{sub:302}--\ref{sub:307}, we prove the  statement of this theorem by providing an explicit algorithm solving \eqref{eq:307:edge_initial_conditions}--\eqref{eq:307:edge_interelement_conditions}. 

For $T=M\lambda+\tau_{0}$ with $\tau_{0}\ne 0$, the variable surplus in \eqref{eq:307:edge_initial_conditions}--\eqref{eq:307:edge_interelement_conditions} is equal to 
\begin{equation} \label{eq:308:nonuniform_variable_surplus} 
	N_{s}:=N_{v} - N_{e},\quad 
	N_{s}(M,N)= 2MN +2M - 3N +1.
\end{equation}  
The function $N_{s}$ monotonically increases with $M$. Thus, to prove that the system~\eqref{eq:307:edge_initial_conditions}--\eqref{eq:307:edge_interelement_conditions} is underdetermined for $M \ge 2$, consider first the case $M = 2$. Then the surplus is equal to $N_{s}(2,N)= N+5>0$. 
For each increase in $M$ by one, {$2N+2$ unknowns are added} towards $N_{s}$. 

We resolve  the underdetermined nonhomogeneous linear system~\eqref{eq:307:edge_initial_conditions}--\eqref{eq:307:edge_interelement_conditions} in the following way.
The algorithm contains four steps if the element numbers $N$ is odd and five if $N$ is even. The first four steps are common for any $N$.

\textit{Step I.} The traveling waves $w^{\pm}_{k,m}$ for $m=0,1$ and $m=2M+1,2M+2$ with $k\in J_{s}$ are expressed respectively  { through 
\eqref{eq:307:edge_initial_conditions}, \eqref{eq:307:edge_terminal_conditions}.}

\textit{Step II.} The control functions $u_{\pm N,m}$ for $m\in {J}_{t}\setminus \{2M+1\}$ are found by using the boundary conditions~\eqref{eq:307:edge_boundary_conditions}.  

\textit{Step III.} The interelement conditions from~\eqref{eq:307:edge_interelement_conditions} for $n\in J_{x} \setminus \{-N,0,N\}$ and $m=0,1,2M-1,2M$ are resolved. 
The pairs of expressed variables are chosen depending on the indices $n$, $m$. 
The first element of these pair is  the control function $u_{n,m}$. 
The second variable is chosen as {$w^{\pm}_{n\mp 1,m+2}$ if $\pm n<0$ and $m=0,1$ or $w^{\pm }_{n\pm 1,m}$ if $\pm n>0$ and $m=2M-1,2M$.}
	
\textit{Step IV.} The `inner' interelement conditions~\eqref{eq:307:edge_interelement_conditions} with the  indices $n \ne 0$ and $m =3,4,\ldots,2M-2$ are resolved with respect to two traveling waves 
{$w^{\pm }_{n\mp 1,m+2}$ and $w^{\mp}_{n\mp 1,m}$ for $\pm n<0$.}
Similarly  { to Step III} 
these functions are defined on   { either 
	$\Omega_{n-1}$} or $\Omega_{n+1}$ depending on which is closer to the boundary segments $S^{x}_{\pm N}$.  

{\textit{Step V.} The} conditions of 
continuity~\eqref{eq:307:edge_interelement_conditions} for $n=0$ {are satisfied} if $N$ is even. 
The pairs of equations for $m=0,1$ are resolved with respect to the functions $w^{+}_{-1,m+2}$ and $w^{-}_{1,m+2}$.
The other $2M-1$ pairs with $m>1$ are satisfied by $w^{-}_{-1,m}$ and $w^{+}_{1,m}$.

Although not unique, the proposed scheme applied to the system~\eqref{eq:307:edge_initial_conditions}--\eqref{eq:307:edge_interelement_conditions} confirms its solvability for $M>1$. $\qed$
\end{proof}

Note that  the solution $(v,r)$ to  \eqref{eq:301:boundary_value_problem} is continuous by construction for continuous initial conditions and free variables.
{The rank of the coefficient matrix in the system~\eqref{eq:307:edge_initial_conditions}--\eqref{eq:307:edge_interelement_conditions} is equal to the total number of equations.} In its turn, there can be more than one square submatrices of the coefficient matrix with this rank. In this sense, the choice of free variables may turn out to be not unique, and therefore the solution algorithm is not unique too.

For convenience, we introduce the vector-valued functions { $\boldsymbol{y}_{i}(z)=\{y_{i,j}(z)\},$ $i=0,1,$ representing free variables in \eqref{eq:307:edge_initial_conditions}--\eqref{eq:307:edge_interelement_conditions} and defined on domains $(0,\tau_{0})$ and $(0,\tau_{1})$, respectively. For odd $N$, the components of $\boldsymbol{y}_{0}$ are 
$u_{n,2m}$ with $n \in J_{x} \setminus \{ -N, N \}$, $m = 1,2,\ldots,M-1$, and $w^{\pm}_{0,2m}$ with $m = 1,2,\ldots,M$ while the components of $\boldsymbol{y}_{1}$ are  $u_{n,2m+1}$ with $n \in J_{x} \setminus \{ -N, N \},$ $m = 1,2,\ldots,M-2$ and $w^{\pm}_{0,2m+1}$ with $m = 1,2,\ldots,M-1$. For even $N$, the components of $\boldsymbol{y}_{0}$ are $u_{n,2m}$ with $n \in J_{x} \setminus \{ -N, N \},$ $m = 1,2,\ldots,M -1$, $u_{0,2m+M}$ with $m=0,1$, and $w^{\pm}_{\mp 1,2m}$ with $m = 2,3,\ldots,M$, while $\boldsymbol{y}_{1}$ consists of $u_{n, 2m+1}$ with $n \in J_{x} \setminus \{ -N, N \}$, $m = 1,2,\ldots,M-2$, $u_{0,1+2m(M-1)}$ with $m=0,1$, and $w^{\pm}_{\mp 1,2m+1}$ with $m = 2,3,\ldots,M-1$. }

For any $N$, the unknowns $y_{i,j}$ are defined on the domain $z\in(0,\tau_{i})$, {$i=0,1.$} 
These functions are combined in vector-valued functions $\boldsymbol{y}_{0}(z)\in \mathbb{R}^{N^{0}_{s}}$ and  $\boldsymbol{y}_{1}(z)\in \mathbb{R}^{N^{1}_{s}}$. 
Here, $N^{0}_{s} = MN + M - N +1$, $N^{1}_{s} = MN + M - 2N$ both for odd and even $N$.
The total surplus number $N_{s} = N^{0}_{s} + N^{1}_{s}$ is given in~\eqref{eq:308:nonuniform_variable_surplus}.
In the proposed scheme for solving the system 
~\eqref{eq:307:edge_initial_conditions}--\eqref{eq:307:edge_interelement_conditions}, the free variables can be both traveling waves and 
control functions. Non-free control functions are their linear combinations. Therefore,  the  {vectors  $\boldsymbol{y}_{i}$ 
	ultimately determine the possible variation of  optimal control.} 


Similarly to a rod subject only to boundary controls \cite{KostinGavrikovJCSSI:2021, Kostin:2022a},    there exists a critical control time for the considered system. 

\begin{theorem}
If $T<2\lambda,$ the solution $(v, r)$ to the BVP \eqref{eq:301:boundary_value_problem} does not exist for arbitrary initial and terminal conditions {$v_{0},r_{0},v_{1},r_{1}\in H^{1}(I_{L})$}.
\end{theorem}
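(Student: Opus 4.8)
The plan is to prove non-solvability for generic data by showing that the equivalent linear system \eqref{eq:307:edge_initial_conditions}--\eqref{eq:307:edge_interelement_conditions} becomes \emph{overdetermined} as soon as $T<2\lambda$. Writing $T=M\lambda+\tau_0$ with $0\le\tau_0<\lambda$ as before, the hypothesis $T<2\lambda$ forces $M\in\{0,1\}$, so it suffices to treat these two values (the borderline subcase $\tau_0=0$, i.e. $T=\lambda$, being handled by the analogous collapsed mesh of \cite{Kostin:2022b}). Since Subsects.~\ref{sub:302}--\ref{sub:307} established the equivalence of the BVP \eqref{eq:301:boundary_value_problem} with this algebraic system, it is enough to exhibit, for each admissible $M\le 1$, at least one nontrivial linear relation among the data $v_0,r_0,v_1,r_1$ that every solution must satisfy but that fails for a generic choice.

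For $M=0$ I would argue by a direct overlap in the mesh. Here $2M+1=1$, so the slab index $m=1$ labels \emph{both} an initial edge wave, prescribed through \eqref{eq:307:edge_initial_conditions} in terms of $v_0,r_0$, and a terminal edge wave $w^{\pm}_{k,2M+1}$, prescribed through \eqref{eq:307:edge_terminal_conditions} in terms of $v_1,r_1$. Equating the two prescriptions of $w^{\pm}_{k,1}$ on the common interval of positive length yields pointwise identities tying $v_0,r_0$ to $v_1,r_1$; these cannot hold for arbitrary $H^1$ data, and the claim follows at once.

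The case $M=1$ is more delicate and rests on the parity structure of the mesh. Every relation in \eqref{eq:307:edge_boundary_conditions}--\eqref{eq:307:edge_interelement_conditions} couples slab indices $m$ and $m+2$ only, and the initial/terminal data enter at slabs of definite parity; hence the system splits into two independent subsystems, $\Sigma_0$ with unknowns living on $(0,\tau_0)$ (even $m$) and $\Sigma_1$ with unknowns on $(0,\tau_1)$ (odd $m$). For solvability with arbitrary data each subsystem must have full row rank. Recomputing the equation/unknown balance for the collapsed mesh $M=1$---equivalently, continuing the split surplus formulas $N^{0}_{s}=MN+M-N+1$ and $N^{1}_{s}=MN+M-2N$ associated with \eqref{eq:308:nonuniform_variable_surplus} to $M=1$---gives $N^{1}_{s}=1-N<0$ for $N>1$. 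Thus the odd subsystem $\Sigma_1$ carries at least $N-1$ more scalar equations than free unknowns, so after eliminating the waves fixed in Steps I--II of the previous proof there remain equations with no variables left to absorb them, and these reduce to compatibility conditions on $v_0,r_0,v_1,r_1$.

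The hard part is converting this surplus count into genuine inconsistency: having more equations than unknowns does not by itself forbid a solution, so I must verify that the redundant rows of $\Sigma_1$ are linearly independent of the rest, i.e. that the coefficient matrix of $\Sigma_1$ has full column rank. I would establish this by running the elimination of the previous theorem restricted to odd $m$ and checking that it is triangular (each newly expressed wave is matched against a fresh equation), so that the residual $N-1$ equations pair nontrivially with the data and yield explicit, nonvanishing linear functionals of $(v_0,r_0,v_1,r_1)$. Exhibiting one such functional together with a datum on which it is nonzero then completes the argument; verifying the triangularity/independence of this restricted elimination is the main obstacle, and the decoupling into $\Sigma_0,\Sigma_1$ is precisely what makes it tractable, since the aggregate count $N_s$ fails to detect the obstruction for small $N$.
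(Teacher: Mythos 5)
Your argument for $M=1$---the case that carries the theorem's content---has a genuine gap, and you flag it yourself: everything is reduced to verifying that the overdetermined odd-parity subsystem $\Sigma_1$ has full column rank, and you then only describe how you \emph{would} check this (``running the elimination \dots and checking that it is triangular'') without carrying it out. A deficit in the equation/unknown count proves nothing by itself, as you concede, and even the count is shaky: the split surplus formulas $N^{0}_{s}=MN+M-N+1$, $N^{1}_{s}=MN+M-2N$ were derived for the mesh with $M\ge 2$, where Steps III and IV of the solvability algorithm act on the disjoint index ranges $m=0,1,2M-1,2M$ and $3\le m\le 2M-2$; at $M=1$ these ranges collapse onto each other ($2M-1=1$), so continuing the formulas to $M=1$ requires a separate justification that you do not give. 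As written, the proposal therefore does not prove the statement.

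What you are missing is that no rank analysis is needed at all. For $M=1$ the initial conditions \eqref{eq:307:edge_initial_conditions} fix $w^{\pm}_{k,m}$ for $m=0,1$, and the terminal conditions \eqref{eq:307:edge_terminal_conditions} fix them for $m=2M+1=3$ and $m=2M+2=4$. Hence in the first equation of \eqref{eq:307:edge_interelement_conditions} taken at $m=1$---the continuity-of-$v$ equation, the one in which no control function $u_{n,m}$ appears---
\begin{equation*}
w^{+}_{n-1,3}(z) + w^{-}_{n-1,1}(z) - w^{+}_{n+1,1}(z) - w^{-}_{n+1,3}(z) = 0,
\qquad n\in J_{x}\setminus\{-N,N\},
\end{equation*}
every term is already prescribed by the data: $w^{-}_{n-1,1}$ and $w^{+}_{n+1,1}$ by $v_0,r_0$, and $w^{+}_{n-1,3}$ and $w^{-}_{n+1,3}$ by $v_1,r_1$. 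This identity is therefore precisely the explicit, generically violated linear functional of $(v_0,r_0,v_1,r_1)$ that your plan requires but never produces; it falls out of the mesh structure with no elimination, no parity decomposition, and no rank computation, and this one-line observation is the paper's entire proof. (In your parity language: for $M=1$ \emph{all} odd-slab waves are pinned by data, so the controls $u_{n,1}$ can absorb only the second equations, leaving the first equations as pure compatibility conditions.) Your $M=0$ overlap argument is fine, and the paper does not even discuss that case, but the theorem stands or falls with $M=1$.
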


\begin{proof}
To prove that the system under study is not controllable if $T<2\lambda$ ($M=1$), we consider equations \eqref{eq:307:edge_interelement_conditions} for $M=1$. It  is enough to analyze only the  first equation in any of the systems~\eqref{eq:307:edge_interelement_conditions}. Indeed, {it} 
 has the form
	\begin{equation} \label{eq:309:unsatisfied_conditions} 
	\begin{array}{c}\ds
		w^{+}_{n-1,3} + w^{-}_{n-1,1} =
		w^{+}_{n+1,1} + w^{-}_{n+1,3}, \quad
		n\in J_{x} \setminus \{-N,N\},
	\end{array}
\end{equation}  
where $w^{+}_{n+1,1}$ $w^{-}_{n-1,1}$ are expressed through the given initial functions $v_{0}$, $r_{0}$, and $w^{+}_{n-1,3}$ $w^{-}_{n+1,3}$ depend on the {terminal} distributions $v_{1}$, $r_{1}$. Thus, the relations~\eqref{eq:309:unsatisfied_conditions} are fulfilled only with a special combination of initial and terminal states.  $\qed$
\end{proof}


Besides the continuity conditions  
 on the edges  
discussed in the previous subsections, the corresponding arrangement of unknowns at the mesh vertices has to be done.
To this end, we conjugate  with their neighbors only the free functions {$y_{i,j}(z)$  defined in Subsect.~\ref{sub:308}.} 

If $N$ is odd, 
the  conditions at vertices are given by
\begin{equation} \label{eq:310:odd_links}
	\begin{array}{l}
		u_{n,m}(0)=u_{n,m-1}(\tau_{1-i}), \quad
		m=2,3,\ldots,2M-1, \quad
		i=m\,\mathrm{mod}\, 2;
	\\
		w^{\pm}_{0,m}(0) = w^{\pm}_{0,m-1}(\tau_{1-i}),\quad  
		m =2,3,\ldots,2M+1, \quad
		i=m\,\mathrm{mod}\, 2, 	
	\end{array}
\end{equation}  
{where $n \in J_{x} \setminus \{ -N, N \}$.} Thus, the number of these equations  is ${N_b^1}= MN+M-N+1$ for $i=0$  and {$N^{0}_{b}=N^{1}_{b}$} for $i=1$. 

If $N$ is even, 
the  relations at vertices are given by
\begin{equation} \label{eq:310:even_links} 
	\begin{array}{l}
		u_{n,m}(0)=u_{n,m-1}(\tau_{1-i}),\quad
		m =2,3,\ldots,2M-1, \quad
		i=m\,\mathrm{mod}\, 2;
	\\
		u_{0,0}(0)=0,\quad
		u_{0,(2M-1)i+1}(0)=u_{0,(2M-1)i}(\tau_{i}),\quad
		i=0,1;   
	\\
		w^{\pm}_{\mp 1,m}(0)=w^{\pm}_{\mp 1,m-1}(\tau_{1-i}),\quad
		m=4,5,\ldots,2M+1,\quad
		i=m\,\mathrm{mod}\, 2, 
	\end{array}
\end{equation}  
{where $n \in J_{x} \setminus \{ -N, N \}$.}
For this case, the number of equations is equal to $N^{1}_b=MN+M-N+1$ for $i=0$ and $N^{0}_b=MN+M-N$ for $i=1$.

\section{Optimal Control Design}\label{sec:4}

		\subsection{Mean Energy Decomposition and One-Dimensional Variational Problem}\label{sub:401}
		
Let us analyze the structure of the objective functional $E$ in  terms of the traveling waves $w^{\pm}_{k,m}$.		

\begin{theorem}
	The OCP~\eqref{eq:301:homogeneous_control_problem}--\eqref{eq:301:homogeneous_ functionals} is reduced for $T>2\lambda$ to the variational problem: 
	Find such vector-valued functions  $\boldsymbol{y}_{i}^{*}(z)$ depending on one variable $z\in[0,\tau_{i}]$ $(i=0,1)$ and a constant $c^{*}_{1}\in\mathbb{R}$ that minimize a quadratic functional
		\begin{equation}\label{eq:402:variations_calculus_problem}
			\tilde E[\boldsymbol{y}_{0}^{*},\boldsymbol{y}_{1}^{*}] =
			\min\limits_{\boldsymbol{y}_{0},\boldsymbol{y}_{1},c_{1}} \tilde E[\boldsymbol{y}_{0},\boldsymbol{y}_{1}]
		\end{equation}	
	subject to  linear boundary constraints. 
\end{theorem}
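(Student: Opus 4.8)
The plan is to show that, once the feasibility constraints of \eqref{eq:301:boundary_value_problem} are resolved, the mean energy becomes an explicitly quadratic functional of the finitely many free edge functions, and that the only surviving constraints are the linear vertex conjugation relations. Throughout I rely on the preceding theorem, which for $M>1$ (equivalently $T>2\lambda$) expresses every edge traveling wave $w^{\pm}_{k,m}$ and every control map $u_{n,m}$ in \eqref{eq:307:edge_initial_conditions}--\eqref{eq:307:edge_interelement_conditions} as an affine function of the free variables $\boldsymbol{y}_{0},\boldsymbol{y}_{1}$ (defined on $(0,\tau_{0})$ and $(0,\tau_{1})$), the terminal constant $c_{1}$, and the fixed data $v_{0},r_{0},v_{1},r_{1}$. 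The first step is to simplify the energy density on the feasible set: substituting \eqref{eq:302:dAlembert_solutions} into $e$ and using $Q=0$, the relations $v_{t}=r_{x}$ and $v_{x}=r_{t}-f$ hold a.e.\ on each $\Omega_{k}$, so the four quadratic terms in $e$ collapse to $e=\bigl((w^{+}_{k})'(t+x)\bigr)^{2}+\bigl((w^{-}_{k})'(t-x)\bigr)^{2}$; the control $f$ is thereby eliminated from the integrand and survives only through the constraints.

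Next I would decompose $\int_{\Omega}e\,\mathrm{d}\Omega$ over the mesh cells and pass to the characteristic coordinates \eqref{eq:303:coordinate_transformation}, whose Jacobian is $\tfrac12$. The essential point, for which the double mesh of Sect.~\ref{sec:3} was constructed, is that each interior cell is a parallelogram whose characteristic widths are the constants $\tau_{0}$ or $\tau_{1}$. Since $\bigl((w^{+}_{k,m})'\bigr)^{2}$ depends on $z^{+}$ only, integrating out the transverse characteristic variable over a cell yields a \emph{constant} weight ($\tau_{0}$ or $\tau_{1}$) times the one-dimensional integral $\int_{0}^{\tau_{i}}\bigl((w^{\pm}_{k,m})'(z)\bigr)^{2}\,\mathrm{d}z$, and symmetrically for $w^{-}$. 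Summation over the finitely many cells then rewrites $E$ as a finite, positively weighted sum of such one-dimensional integrals, with no residual cross terms between $w^{+}$ and $w^{-}$.

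Finally I would substitute the affine expressions for all edge waves supplied by the preceding theorem. Because $e$ is quadratic in the derivatives, and those derivatives are affine in $\boldsymbol{y}_{0}',\boldsymbol{y}_{1}'$ while $c_{1}$ enters each edge wave only as an additive shift in $z$ and hence drops out of every derivative, the substitution produces the quadratic functional $\tilde E[\boldsymbol{y}_{0},\boldsymbol{y}_{1}]$ of \eqref{eq:402:variations_calculus_problem}. It then remains to read off the admissible set: the vertex conjugation relations \eqref{eq:310:odd_links} and \eqref{eq:310:even_links}, being equalities between endpoint values of the free edge functions (and involving $c_{1}$), are exactly the linear boundary constraints of the reduced problem. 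Establishing that the correspondence $(\boldsymbol{u},c_{1})\mapsto(\boldsymbol{y}_{0},\boldsymbol{y}_{1},c_{1})$ is a bijection of the feasible set onto the constraint set on which $E=\tilde E$ then closes the equivalence and minimizing $E$ over $(\boldsymbol{u},c_{1})$ coincides with minimizing $\tilde E$ over $(\boldsymbol{y}_{0},\boldsymbol{y}_{1},c_{1})$.

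I expect the decomposition step to be the crux. The main obstacle is verifying that every mesh cell truly contributes a constant characteristic weight, so that no $z$-dependent weights or $w^{+}$--$w^{-}$ couplings survive, and confirming that after elimination the only residual coupling is through the endpoint vertex relations rather than additional pointwise-in-$z$ constraints. Particular care is needed for the boundary cells adjacent to $S^{0}$, $S^{T}$, and $S^{x}_{\pm N}$, where the parallelograms are truncated and the weights must be checked separately, and for the precise role of $c_{1}$, which must be confirmed to enter only the boundary constraints and not the quadratic form itself.
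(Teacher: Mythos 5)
You follow the same route as the paper's own proof: use the constraint $Q=0$ to collapse the energy density to $e=\tfrac12 v_t^2+\tfrac12 v_x^2$, split it via d'Alembert's form into $(w_k^{+\prime})^2+(w_k^{-\prime})^2$ with no cross terms, integrate out the transverse characteristic variable over each element, substitute the affine parametrization of all edge waves by the free variables $\boldsymbol{y}_0,\boldsymbol{y}_1$ and $c_1$, and read off the vertex relations \eqref{eq:310:odd_links}, \eqref{eq:310:even_links} as the linear boundary constraints. Your observation about $c_1$ is also essentially the paper's: it enters the terminal waves as an additive constant (not as a shift of the argument $z$, as you write), so it drops out of all derivatives and survives only in the constraints, exactly as in \eqref{eq:402:linear_state_relation} and \eqref{eq:402:essential_boundary_conditions}.

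However, the step you yourself single out as the crux --- ``verifying that every mesh cell truly contributes a constant characteristic weight, so that no $z$-dependent weights \ldots{} survive'' --- fails as stated. Integrating $(w_k^{\pm\prime}(z^\pm))^2$ over the strip $\Omega_k$ produces the trapezoidal weight $\Delta z_k^{\mp}$ of \eqref{eq:delta z}: it equals the constant $\lambda$ only on the middle part of $I^\pm_k$, and it is a genuinely linear function of $z^\pm$ on the first and last subintervals of length $\lambda$, precisely where the cells are truncated by $S^0$ and $S^T$. Consequently the reduced functional \eqref{eq:402:energy_vector_form} carries $z$-dependent diagonal weight matrices $\boldsymbol{G}^i(z)$; your expected conclusion that only constant weights $\tau_0,\tau_1$ appear is false (note also that the Jacobian $\tfrac12$ belongs in these weights). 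This inaccuracy does not sink the theorem: a positively weighted $L^2$-type integral of affine functions of $\boldsymbol{y}_i'$ is a quadratic functional whatever the weights, so \eqref{eq:402:variations_calculus_problem} follows regardless. The fact that rescues the stronger property you were aiming at --- and which the paper actually needs, but only in the \emph{next} theorem, to get constant-coefficient Euler--Lagrange equations --- is different: the non-constant portions of the weights multiply only the edge waves $w^\pm_{k,m}$ with $m\in\{0,1,2M+1,2M+2\}$, which are fixed by the data through \eqref{eq:307:edge_initial_conditions} and \eqref{eq:307:edge_terminal_conditions} and hence independent of $\boldsymbol{y}_i$; the free variables are paired exclusively with the constant weight $\lambda$. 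Your proof plan, as written, would stall at the verification you flagged unless this observation is supplied.
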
\label{Th:reduction to quadratic functional}

\begin{proof}
Paying attention to the structure of the energy density  $e$ defined in~\eqref{eq:301:homogeneous_ functionals}, and noticing that $r_t - f = v_x$ and $r_x = v_t$ on the solution, we get $e = \frac12 v_{t}^2 +\frac12 v_{x}^2$. By taking into account the expression for the displacements from~\eqref{eq:302:dAlembert_solutions}, the energy density {$e=e_k$} on each subdomain $\Omega_{k}$, $k\in J_{s}$, is given by	
    \begin{equation}\label{eq:401:energy_density_decomposition}
{        	\begin{array}{l}
e_k =
		\frac12\left(w^{+\prime}_{k}(z^{+}) + w^{-\prime}_{k}(z^{-})\right)^2 +
		\frac12\left(w^{+\prime}_{k}(z^{+}) - w^{-\prime}_{k}(z^{-})\right)^2 =
       \\
      	 e^{+}_{k}(z^{+}) +e^{-}_{k}(z^{-}), \quad      
        e^{+}_{k}(z^{+}) = \left(w^{+\prime}_{k}(z^{+})\right)^2,\quad 
        e^{-}_{k}(z^{-}) = \left(w^{-\prime}_{k}(z^{-})\right)^2.
	\end{array}
	}
    \end{equation}	
Here, {$(z^{+},z^{-}) \in \Omega_{k}$ and} the prime marks the derivatives of the functions $w^{\pm}_{k}$ with respect to {$z^\pm$.} 
As a result, the terms $e^{\pm}_{k}$ in the restriction $e_k$ of the energy density  $e$ on $\Omega_k$  depend only on one corresponding argument $z^{\pm}$. 

Since the functional $E$ of the mean mechanical energy is obtained from the function $e$ through a linear transformation (integration), it splits in its turn into independent parts $E_k^\pm$	
	\begin{equation*}
        	  E =\frac{1}{T}\int\nolimits_{\Omega}  e \,\mathrm{d}\Omega = 
        	  \sum\limits_{k\in J_s}\big(E^{+}_{k}+ E^{-}_{k}\big),\quad
			E^{\pm}_{k} = \frac{1}{T}\int\nolimits_{\Omega_{k}}  e^{\pm}_{k}(z^{\pm}) \,\mathrm{d}\Omega.
	\end{equation*}	
Here, the functional $E^{\pm}_{k}$ depends only on the traveling wave $w^{\pm}_{k}$ defined on their domains $I^{\pm}_{k}$~\eqref{eq:302:dAlembert_domains}, and the set of  indices $J_s$ is introduced in~\eqref{eq:203:space_indexing_sets}. 	
	
Substituting the expression for $e^{\pm}_{k}$ from~\eqref {eq:401:energy_density_decomposition} into $ E^{\pm}_{i}$, we arrive at 
	\begin{equation}\label{eq:401:term_mean_energy} 
         		E^{\pm}_{k} = \frac{1}{T}\int_{\Omega_{k}} \left(w^{\pm\prime}_{k}(z^{\pm})\right)^2  \,\mathrm{d}\Omega = 
        \frac{1}{T}\int_{z^{\pm}_{k}}^{T-z^{\mp}_{k}} \left(w^{\pm\prime}_{k}(z^{\pm})\right)^2 \Delta z^{\mp}_{k}(z^{\pm}) \,\mathrm{d}z^{\pm},
	\end{equation}	
Here, the piecewise linear functions
\begin{equation}\label{eq:delta z}
{       \Delta z^{\mp}_{k}(z^{\pm}) =
       \left\{
      	\begin{array}{ll}
             (z^{\pm} - z^{\pm}_{k}),  	& z^{\pm}\in[ z^{\pm}_{k}, -z^{\mp}_{k})
           \\[0 ex]
             \lambda,									&z^{\pm}\in[ -z^{\mp}_{k}, T+z^{\pm}_{k}]
           \\[0 ex]
             (T-z^{\mp}_{k}-z^{\pm}), 	&z^{\pm}\in(T+z^{\pm}_{k},T-z^{\mp}_{k}]
       	\end{array}
					\right. 
					}
\end{equation}
arise as a result of primary integration over the coordinate $z^{\pm}$ (details are available in the supplement).
%
Dividing the intervals of integration $I^{\pm}_{k}=[z^{\pm}_{k},T-z^{\mp}_{k}]$ {in~\eqref{eq:401:term_mean_energy}}
 into the subintervals $I^{\pm}_{k,m}$ and replacing the function $w^{\pm}_{k}$ with $w^{\pm}_{k,m}$ in accordance with~\eqref{eq:306:wave_function_intervals}, 
 we arrive at the expression of the mean energy
	\begin{equation}\label{eq:402:energy_decomposition}
		\begin{array}{c}\ds
			E = E^{+} + E^{-},\;
			E^{\pm} = \frac{1}{T}\sum\limits_{k\in J_{s}} 
			\sum\limits_{m\in J_{w}} 
			\int\limits_{0}^{\tau_{j_{m}}} \left(w^{\pm\prime}_{k,m}(z)\right)^2  \Delta z^{\mp}_{k}(z^{\pm}_{k,m}+z) \,\mathrm{d}z.
		\end{array}
	\end{equation}			
Here, the factors $\Delta z^{\pm}_{k}$ are the same as in~\eqref{eq:401:term_mean_energy}, 
 and $j_{m}=m\,\mathrm{mod}\, 2$.

		We introduce the vector-valued functions $\boldsymbol{w}_{i}:[0,\tau_{i}]\to \mathbb{R}^{N^{i}_{w}}$ with $i=0,1$ and $N^{i}_{w}=2N(M-i+2)$ through their elements {$w_{i,2j-1} = w^{+}_{k,2m+i},$ $w_{i,2j} = w^{-}_{k,2m+i},$ with $k\in J_{s},$ $2m+i\in J_{w},$ $j =(2M+3)(k+N-1)+m+1.$}			 
The positive definite diagonal  matrix-valued functions $\boldsymbol{G}^{i}:[0,\tau_{i}]\to {\mathbb{R}^{N_{w}^i\times N_{w}^i}}$ for $i=0,1$ are introduced via their non-zero elements 
 {$G^{i}_{2j-1,2j-1}(z) = $} {$\sqrt{\Delta z^{-}_{k}(z^{+}_{k,2m+i}+z)},$ $G^{i}_{2j,2j}(z)    = \sqrt{\Delta z^{+}_{k}(z^{-}_{k,2m+i}+z)},$ with $k\in J_{s},$ $2m+i\in J_{w},$ $j= (2M+3)(k+N-1)+m+1.$}       
Therefore, the cost functional $E$ is quadratic and can then be rewritten in the form
	\begin{equation}\label{eq:402:energy_vector_form}\displaystyle
			E = \tilde E[\boldsymbol{y}_{0},\boldsymbol{y}_{1}]=\frac{1}{T}\sum_{i=0}^{1} \int_{0}^{\tau_{i}} \varepsilon_i(z) \,\mathrm{d}z,\quad \varepsilon_i = \big(\boldsymbol{G}^{i}\boldsymbol{w}_{i}^{\prime}\big)\cdot \big(\boldsymbol{G}^{i} \boldsymbol{w}_{i}^{\prime}\big).
	\end{equation}			

The functions $\boldsymbol{w}_{i}$ are linearly expressed through {the 
functions $\boldsymbol{y}_{i}$ defined in Subsect.~\ref{sub:308}.} 
 By taking into account the initial states $\big(v_{0}(x),r_{0}(x)\big)$ and  the  terminal states $\big(v_{1}(x),r_{1}(x)\big)$, 
these relation are given by
	\begin{equation}\label{eq:402:linear_state_relation}
		\boldsymbol{w}_{i}(\boldsymbol{y}_{i}(z),z,c_{1}) = \boldsymbol{A}_{i}\boldsymbol{y}_{i}(z)  +\boldsymbol{g}_{i}(z)+ c_{1}\boldsymbol{a}_{i},\quad
		z \in [0,\tau_{i}],\quad
		i=0,1. 
	\end{equation}		
Here,   {$\boldsymbol{A}_{i} \in \mathbb{R}^{N^{i}_{w}\times N_{s}^i}$, $\boldsymbol{a}_{i} \in \mathbb{R}^{N^{i}_{w}}$ are known matrices and vectors}, and $\boldsymbol{g}_{i}:[0,\tau_{i}]\to \mathbb{R}^{N^{i}_{w}}$ are functions expressed through the initial and terminal values of $v$ and~$p$. 	

Finally, the vertex conditions either~\eqref{eq:310:odd_links} or~\eqref{eq:310:even_links} can be written as
	\begin{equation}\label{eq:402:essential_boundary_conditions}
		\begin{array}{c}\ds
{		\boldsymbol{B}_{1,i}\boldsymbol{y}_{i}(\tau_{i}) - \boldsymbol{B}_{0,1-i}\boldsymbol{y}_{1-i}(0) = 
		c_{1}\boldsymbol{b}_{1,i} + \boldsymbol{b}_{0,i}, }
		\\
{		\{\boldsymbol{B}_{0,i},\boldsymbol{B}_{1,i}\} \subset \mathbb{R}^{N^{i}_{b}\times N_{s}^i},\quad
		\{\boldsymbol{b}_{0,i},\boldsymbol{b}_{1,i}\} \subset \mathbb{R}^{N^{i}_{b}},\quad 		i=0,1.}
		\end{array}
	\end{equation}			
Therefore, to find solution to the OCP \eqref{eq:301:homogeneous_control_problem}--\eqref{eq:301:homogeneous_ functionals}, we need to minimize the quadratic functional \eqref{eq:402:energy_vector_form} subject to the linear boundary conditions \eqref{eq:402:essential_boundary_conditions}. $\qed$
\end{proof}


				\subsection{Solution of the One-Dimensional Variational Problem}\label{sub:403}
				
				\begin{theorem}
The solution to the one-dimensional minimization problem \eqref{eq:402:variations_calculus_problem}, \eqref{eq:402:energy_vector_form}--\eqref{eq:402:essential_boundary_conditions}  exists and unique, and can be found by solving a BVP for the linear ODE system with constant coefficients
{
		\begin{equation}\label{eq:402:Euler_Lagrange_ODE}
		\begin{array}{c}\ds
			\boldsymbol{p}^{\prime}_{i}(z)=0,\quad
				{\boldsymbol{p}_{i}:} =  \frac{\partial \varepsilon_i}{\partial \boldsymbol{y}^{\prime}_{i}} = \lambda
\boldsymbol{A}^{\mathrm{T}}_{i}\boldsymbol{A}_{i} {\boldsymbol{y}'_{i}(z)}
+ \lambda\boldsymbol{A}^{\mathrm{T}}_{i} {\boldsymbol{g}'_{i}(z)},\quad
			z\in[0,\tau_{i}],\quad
			i=0,1,
				\\
		\end{array}
	\end{equation}
	subject to the boundary conditions \eqref{eq:402:essential_boundary_conditions} and natural conditions
		\begin{equation} \label{eq:403:natural_boundary_conditions}
			\begin{array}{c}\ds
					\boldsymbol{p}_{i}(0) = \boldsymbol{C}^{\mathrm{T}}_{0,i} \boldsymbol{h}_{1-i},\quad
					\boldsymbol{p}_{i}(\tau_{i})  = \boldsymbol{C}^{\mathrm{T}}_{1,i}\boldsymbol{h}_{i},\quad
					\boldsymbol{h}_{i} \in \mathbb{R}^{N^{i}_{b}},
				\\[1ex]\ds
		\boldsymbol{C}_{0,i}=\boldsymbol{B}_{0,i}-\frac{\boldsymbol{b}_{1,1-i}\boldsymbol{b}^{\mathrm T}_{1,1-i}\boldsymbol{B}_{0,i}}{|\boldsymbol{b}_{1,1-i}|^2},\quad
		\boldsymbol{C}_{1,i}=\boldsymbol{B}_{1,i}-\frac{\boldsymbol{b}_{1,i}\boldsymbol{b}^{\mathrm T}_{1,i}\boldsymbol{B}_{1,i}}{|\boldsymbol{b}_{1,i}|^2}.			
				\end{array}
		\end{equation}				
Here, $\varepsilon_i$ are defined in~\eqref{eq:402:energy_vector_form}, \eqref{eq:402:linear_state_relation} and $\boldsymbol{h}_{i}$ are unknown Lagrange  multipliers. 	
}
\end{theorem}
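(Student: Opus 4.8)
The plan is to read \eqref{eq:402:variations_calculus_problem} together with \eqref{eq:402:energy_vector_form}--\eqref{eq:402:essential_boundary_conditions} as the minimization of a convex quadratic functional on the Hilbert space $H^{1}([0,\tau_{0}];\mathbb{R}^{N^{0}_{s}})\times H^{1}([0,\tau_{1}];\mathbb{R}^{N^{1}_{s}})\times\mathbb{R}$ over the affine set carved out by the boundary relations, and then to extract its stationarity system. First I would record that, by \eqref{eq:402:linear_state_relation}, each $\boldsymbol{w}_{i}$ is affine in $(\boldsymbol{y}_{i},c_{1})$ with derivative $\boldsymbol{w}_{i}'=\boldsymbol{A}_{i}\boldsymbol{y}_{i}'+\boldsymbol{g}_{i}'$ independent of $c_{1}$; hence the integrand $\varepsilon_{i}=|\boldsymbol{G}^{i}\boldsymbol{w}_{i}'|^{2}$ in \eqref{eq:402:energy_vector_form} is a nonnegative quadratic form in $\boldsymbol{y}_{i}'$ and $\tilde E$ is convex. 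The feasible set is nonempty, since it contains the states produced by the solvability theorem proved above, and, being the preimage of a point under the continuous affine trace map in \eqref{eq:402:essential_boundary_conditions}, it is closed and affine.

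For existence and uniqueness I would exploit positivity of the quadratic form on the tangent space of the constraint set. Because $\boldsymbol{G}^{i}$ is positive definite and its diagonal equals $\sqrt{\lambda}$ on the components carrying the free variables, by the middle branch of \eqref{eq:delta z}, the coefficient matrix of the leading term is $\boldsymbol{A}_{i}^{\mathrm{T}}(\boldsymbol{G}^{i})^{2}\boldsymbol{A}_{i}=\lambda\,\boldsymbol{A}_{i}^{\mathrm{T}}\boldsymbol{A}_{i}$, which is positive definite provided $\boldsymbol{A}_{i}$ has full column rank. The delicate point is that $\varepsilon_{i}$ sees only the derivative $\boldsymbol{y}_{i}'$, so the form vanishes on $z$-constant increments and uniqueness cannot follow from convexity alone. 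I would close this gap by checking that any admissible variation $(\delta\boldsymbol{y}_{0},\delta\boldsymbol{y}_{1},\delta c_{1})$ of zero energy forces $\boldsymbol{A}_{i}\delta\boldsymbol{y}_{i}'\equiv 0$, hence constant $\delta\boldsymbol{y}_{i}$, and that substituting these constants into the homogeneous form of \eqref{eq:402:essential_boundary_conditions} leaves only the trivial solution; this is precisely the algebraic nondegeneracy of the elimination scheme underlying the solvability theorem. Existence then follows by the direct method (weak lower semicontinuity of $\tilde E$ and coercivity modulo the now-controlled constants), and the tangent-space positivity yields uniqueness.

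To obtain \eqref{eq:402:Euler_Lagrange_ODE}--\eqref{eq:403:natural_boundary_conditions} I would first use the freedom in $c_{1}$ to reduce the constraints: since $c_{1}$ enters \eqref{eq:402:essential_boundary_conditions} only through $c_{1}\boldsymbol{b}_{1,i}$ and is unconstrained, minimizing over it is equivalent to imposing each constraint only on $\mathrm{span}(\boldsymbol{b}_{1,i})^{\perp}$; projecting by $\boldsymbol{I}-\boldsymbol{b}_{1,i}\boldsymbol{b}_{1,i}^{\mathrm{T}}/|\boldsymbol{b}_{1,i}|^{2}$ eliminates $c_{1}$ and turns $\boldsymbol{B}_{0,i},\boldsymbol{B}_{1,i}$ into exactly the rank-one--corrected matrices $\boldsymbol{C}_{0,i},\boldsymbol{C}_{1,i}$ displayed in \eqref{eq:403:natural_boundary_conditions}. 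Adjoining the projected constraints with multipliers $\boldsymbol{h}_{i}$ and taking the first variation, differentiation of $\varepsilon_{i}$ gives $\boldsymbol{p}_{i}=\partial\varepsilon_{i}/\partial\boldsymbol{y}_{i}'=\lambda\boldsymbol{A}_{i}^{\mathrm{T}}\boldsymbol{A}_{i}\boldsymbol{y}_{i}'+\lambda\boldsymbol{A}_{i}^{\mathrm{T}}\boldsymbol{g}_{i}'$ as in \eqref{eq:402:Euler_Lagrange_ODE}; integrating $\int\boldsymbol{p}_{i}\cdot\delta\boldsymbol{y}_{i}'\,\mathrm{d}z$ by parts yields $\boldsymbol{p}_{i}'(z)=0$ in the interior, while the surviving endpoint terms, balanced against the variation of the adjoined constraints, produce the natural conditions $\boldsymbol{p}_{i}(0)=\boldsymbol{C}_{0,i}^{\mathrm{T}}\boldsymbol{h}_{1-i}$ and $\boldsymbol{p}_{i}(\tau_{i})=\boldsymbol{C}_{1,i}^{\mathrm{T}}\boldsymbol{h}_{i}$. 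Since the problem is convex, these stationarity conditions are also sufficient, so their unique solution is the minimizer. The main obstacle I anticipate is the uniqueness step: excluding nonzero constant zero-energy variations rests on the full-column-rank and nondegeneracy properties of the solvability construction rather than on any soft convexity argument.
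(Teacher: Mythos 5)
Your derivation of the optimality system follows essentially the same route as the paper: both arguments hinge on the observation that the weight $\Delta z^{\mp}_{k}$ in \eqref{eq:delta z} equals the constant $\lambda$ (middle branch) on exactly those components of $\boldsymbol{w}_{i}$ that depend on the free variables, so that the relevant part of $\varepsilon_{i}$ is the constant-coefficient form $\lambda\,|\boldsymbol{A}_{i}\boldsymbol{y}_{i}'+\boldsymbol{g}_{i}'|^{2}$; both conclude $\boldsymbol{p}_{i}'=0$ because $\varepsilon_{i}$ contains no undifferentiated $\boldsymbol{y}_{i}$; and your projection of the vertex constraints onto $\mathrm{span}(\boldsymbol{b}_{1,i})^{\perp}$ by $\boldsymbol{I}-\boldsymbol{b}_{1,i}\boldsymbol{b}_{1,i}^{\mathrm{T}}/|\boldsymbol{b}_{1,i}|^{2}$ is equivalent to the paper's elimination of $\delta c_{1}$ via \eqref{eq:602:variation_terminal_constant}: it produces the same rank-one-corrected matrices $\boldsymbol{C}_{0,i},\boldsymbol{C}_{1,i}$ and the same natural conditions \eqref{eq:403:natural_boundary_conditions}, arguably more transparently. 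Where you diverge is existence and uniqueness: the paper does not use the direct method, but instead proves strict positive definiteness of $\boldsymbol{A}_{i}^{\mathrm{T}}\boldsymbol{A}_{i}$, writes the general solution \eqref{eq:solution} of \eqref{eq:402:Euler_Lagrange_ODE} explicitly, and resolves the boundary conditions \eqref{eq:402:essential_boundary_conditions} using the free integration constants $\boldsymbol{\alpha}_{i},\boldsymbol{\beta}_{i}$, which also yields the regularity statement ($\boldsymbol{y}_{i}$ inherits the smoothness of $\boldsymbol{g}_{i}$, hence $(v,r)\in H^{1}(\Omega)$) that your functional-analytic argument does not address.

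The one substantive weakness in your plan is the positive-definiteness step. You assume $\boldsymbol{A}_{i}$ has full column rank and attribute this to ``the algebraic nondegeneracy of the elimination scheme underlying the solvability theorem,'' but the solvability theorem only asserts that the constraint system can be solved with $\boldsymbol{y}_{i}$ left free; it does not by itself say that the map $\boldsymbol{y}_{i}\mapsto\boldsymbol{w}_{i}$ is injective. The paper supplies a separate argument: free variables of wave type enter $\varepsilon_{i}$ directly because $\boldsymbol{G}^{i}$ is diagonal, while free variables of control type enter because \eqref{eq:307:edge_boundary_conditions} or \eqref{eq:307:edge_interelement_conditions} force some $w^{\pm\prime}_{k,m}$ to be a nonzero multiple of $u_{n,m}'$; this (or an equivalent argument) is what you would actually have to write out, rather than cite solvability. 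On the other hand, your treatment of uniqueness is more careful than the paper's: you correctly observe that a functional depending only on $\boldsymbol{y}_{i}'$ is blind to constant increments, so uniqueness additionally requires that constant variations compatible with the homogeneous form of \eqref{eq:402:essential_boundary_conditions} be trivial; the paper disposes of uniqueness with a one-line citation to the standard quadratic-functional result and never isolates this point, so flagging it is a genuine improvement --- though you, too, leave the verification to the reader.
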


\begin{proof}

Let us analyze the structure of the diagonal matrices $\boldsymbol{G}^{i}(z)$  in \eqref{eq:402:energy_vector_form}. Since elements of $\boldsymbol{G}^{i}(z)$  are expressed via the functions $\Delta z^\mp_k,$ we obtain that  non-constant entries of $\boldsymbol{G}^{i}(z)$ are only in rows corresponding to the functions $w^\pm_{k,m}(z)$ that are defined through fixed initial ($m=0,1$) and terminal ($m=2M+1, 2M+2$) conditions. 

Indeed, consider the values of the  functions $\Delta z^\mp_k$ in  \eqref{eq:delta z}.
If $m\notin \{0,1,2M+1,2M+2\}$, then it follows from \eqref{eq:306:wave_function_intervals} that the minimum of the argument $z^\pm_{k,m}+z$ of $\Delta z^\mp_k$ is equal to $z^\pm_{k,2}$, whereas the maximum is $z^\pm_{k,2M}+\tau_0$. Also due to \eqref{eq:306:wave_function_intervals}, $z^\pm_{k,2}=z^\pm_k+\lambda$ and $z^\pm_{k,2M}+\tau_0=z^\pm_k+\lambda M +\tau_0$. According to \eqref{eq:302:dAlembert_domains}, $z^\pm_k+\lambda=-z^\mp_k$, and by definition $\lambda M+\tau_0=T$. 
Thus, the argument $z^\pm_{k,m}+z$ of $\Delta z^\mp_k$ in \eqref{eq:402:energy_decomposition} for the chosen range of the index $m$ belongs to the interval $[-z^\pm_k,T+z^\pm_k]$. Then, it follows from \eqref{eq:401:term_mean_energy} that $\Delta z^\mp_k=\lambda$.
For $m\in \{ 0,1, 2M+1, 2M+2\}$, $\Delta z_k^\mp(z_{k,m}^\pm+z)$ are linear functions of $z$. These non-constant elements of $\boldsymbol{G}_{i}(z)$ are related to entries of $\boldsymbol{w}_i$ depending on initial and terminal conditions yielding zero variation. Therefore, these elements do not influence the variation of $\tilde E$ w.r.t. free variables $\boldsymbol{y}_{i}(z)$.

{ Due to \eqref{eq:402:energy_vector_form} and~\eqref{eq:402:linear_state_relation}, the conjugate to $\boldsymbol{y}_{i}$  variables, that is, vector-valued functions $\boldsymbol{p}_{i}$, are expressed 
as in~\eqref{eq:402:Euler_Lagrange_ODE}.}
Since the Lagrangians $\varepsilon_i$ depend on $\boldsymbol{y}'_{i}$ but not on $\boldsymbol{y}_{i}$, the terms $\frac{\partial \varepsilon_i}{\partial \boldsymbol{y}_{i}}$ do not appear in the Euler--Lagrange equations. 
Thus, the Euler--Lagrange ODEs with constant coefficients are given by~\eqref{eq:402:Euler_Lagrange_ODE}.

The existence of the solution to \eqref{eq:402:Euler_Lagrange_ODE} follows from strictly positive definiteness of the matrices $\boldsymbol{A}^{\mathrm{T}}_{i}\boldsymbol{A}_{i}$. Indeed, $\boldsymbol{A}^{\mathrm{T}}_{i}\boldsymbol{A}_{i}\geq 0$ by construction.
To show that $\boldsymbol{A}^{\mathrm{T}}_{i}\boldsymbol{A}_{i}> 0$, take $v_0=r_0=v_1=r_1=0$, then $\varepsilon_i=\lambda \boldsymbol{y}^{\prime\mathrm{T}}_i \boldsymbol{A}^{\mathrm{T}}_i \boldsymbol{A}_i \boldsymbol{y}^\prime_i$ since $\boldsymbol{g}_i=0$ and $c_1=0$ in \eqref{eq:402:linear_state_relation}. Consider the value of $\varepsilon_i$ on vectors of standard basis {in}  $\mathbb{R}^{N_s^i}$. To this end, take $\boldsymbol{y}^\prime_i =  \big(\delta_{j,l}\big)_{j=1}^{N_s^i}$, where $\delta_{j,l}$ is Kronecker delta. The non-zero component of $\boldsymbol{y}^\prime_i$ corresponds to either $w^{\pm\prime}_{k,m}$ or $u_{n,m}^\prime$. If $1$ is in place of $w^{\pm\prime}_{k,m}$ in $\boldsymbol{y}^\prime_i$, then $\varepsilon_i\ge \lambda \left(w^{\pm\prime}_{k,m}\right)^2 = \lambda>0$ 
{since $\varepsilon_i$  is quadratic form \eqref{eq:402:energy_vector_form} and  $\boldsymbol{G}_i$ is diagonal.}
If $1$ is in place of $u^\prime_{n,m}$ in $\boldsymbol{y}^\prime_i$, then due to \eqref{eq:307:edge_boundary_conditions} or \eqref{eq:307:edge_interelement_conditions}  at least one of the derivatives $w^{\pm\prime}_{k,m}$, $w^{\pm\prime}_{k,m+2}$ equals to some $c\neq 0$.
{Then} $\varepsilon_i\ge \lambda c^2>0$.
Thus, the matrix $\boldsymbol{A}^{\mathrm{T}}_i \boldsymbol{A}_i$ must be  positive for any basis vector.

 If $\boldsymbol{A}^{\mathrm{T}}_{i}\boldsymbol{A}_{i}$ are not strictly definite, then at least one of the components $y_{i,j}'$ of $\boldsymbol{y}'_i$ is absent in $\varepsilon_i$. According {to the definition of $\boldsymbol{y}_i$} 
 this component is either $w^{\pm\prime}_{k,m}$ or $u_{n,m}'$. If $y_{i,j}'=w^{\pm\prime}_{k,m},$ then the presence of $y_{i,j}'$ in $\varepsilon_i$ follows from diagonality  of $\boldsymbol{G}_i$. If $y_{i,j}'=u'_{n,m},$ then one of $w^{\pm\prime}_{k,m}$ is expressed through $u'_{n,m}$ due to \eqref{eq:307:edge_boundary_conditions} or \eqref{eq:307:edge_interelement_conditions}. Thus, each free variables $y_{i,j}'$ enters into   $\varepsilon_i$. Therefore,  $\boldsymbol{A}^{\mathrm{T}}_{i}\boldsymbol{A}_{i}>0$ {and $(\boldsymbol{A}^{\mathrm{T}}_{i}\boldsymbol{A}_{i})^{-1}$ exists. Then the solution to \eqref{eq:402:Euler_Lagrange_ODE} has the form} 
\begin{equation}\label{eq:solution}
\boldsymbol{y}_i=-\int_0^z\int_0^{\xi_2} (\boldsymbol{A}^{\mathrm{T}}_{i}\boldsymbol{A}_{i})^{-1}\boldsymbol{A}^{\mathrm{T}}_{i} {\boldsymbol{g}''_{i}(\xi_1)} 
 \,\mathrm{d}\xi_1\,\mathrm{d}\xi_2 +  \boldsymbol{\alpha}_i z  + \boldsymbol{\beta}_i (z-\tau_i), 
\end{equation}
where $\boldsymbol{\alpha_i}, \boldsymbol{\beta_i}\in \mathbb{R}^{N_s^i}.$ As follows from \eqref{eq:solution}, $\boldsymbol{y}_i$ belongs the same functional space as $\boldsymbol{g}_i$. In its turn,
 $\boldsymbol{g}_i$ is a linear combination of initial and terminal states $(v_0,r_0),$ $(v_1, r_1).$  If $(v_0,r_0),$ $(v_1, r_1)$ are from $H^1(-1, 1),$ then the solution $(v,r)$ to the OCP  expressed linearly via $\boldsymbol{y}_i$  is from $H^1(\Omega)$.

Then the {unknown vectors} $\boldsymbol{\alpha}_i$, $\boldsymbol{\beta}_i$ in \eqref{eq:solution} can be used to resolve boundary conditions \eqref{eq:402:essential_boundary_conditions}. Indeed, from \eqref{eq:310:odd_links}, \eqref{eq:310:even_links} follows that each value of $y_{i,j}(0)$ or $y_{i,j}(\tau_i)$  only enters one equation \eqref{eq:310:odd_links}, \eqref{eq:310:even_links}. Thus, we {would be} able to  resolve boundary conditions at $z=0$ through $\boldsymbol{\beta}_i$ and at $z=\tau_i$ through $\boldsymbol{\alpha}_i$. Therefore, the solution to the BVP  \eqref{eq:402:essential_boundary_conditions}, \eqref{eq:402:Euler_Lagrange_ODE}	 exists. Note that there are more constants than boundary conditions. Thus, some of them are undefined and are used for optimization based on transversality  conditions. The uniqueness of the solution follows from uniqueness of a minimum of a quadratic functional with a positive definite weight matrix \cite{Gelfand:2000}. 
$\qed$

As mentioned above, to actually find the solution of {\eqref{eq:402:variations_calculus_problem}, \eqref{eq:402:energy_vector_form}--\eqref{eq:402:essential_boundary_conditions}}, we need to  satisfy the transversality conditions 
		\begin{equation}\label{eq:403:boundary_transversality_conditions}
			\boldsymbol{p}_{0}(\tau_{0})\cdot \delta\boldsymbol{y}_{0}(\tau_{0}) -
			 \boldsymbol{p}_{0}(0)\cdot \delta\boldsymbol{y}_{0}(0) + 
			\boldsymbol{p}_{1}(\tau_{1})\cdot \delta\boldsymbol{y}_{1}(\tau_{1}) -
			 \boldsymbol{p}_{1}(0)\cdot \delta\boldsymbol{y}_{1}(0) = 0 .
	\end{equation}			
The variation of essential boundary constraints~\eqref{eq:402:essential_boundary_conditions} is given by
	\begin{equation}\label{eq:403:variation_boundary_conditions}
			\boldsymbol{B}_{1,i}\delta\boldsymbol{y}_{i}(\tau_{i}) - \boldsymbol{B}_{0,1-i}\delta\boldsymbol{y}_{1-i}(0) = 
		\delta c_{1}\boldsymbol{b}_{1,i},\quad
		i=0,1.
	\end{equation}			
By taken into account~\eqref{eq:403:variation_boundary_conditions}, we find that	
\begin{equation}\label{eq:602:variation_terminal_constant}
	\delta c_1=\frac{\boldsymbol{b}^{\mathrm T}_{1,i}}{|\boldsymbol{b}_{1,i}|^2}\left(\boldsymbol{B}_{1,i}\delta\boldsymbol{y}_i(\tau_i) - \boldsymbol{B}_{0,1-i}\delta\boldsymbol{y}_{1-i}(0)\right),\quad i=0,1.
	\end{equation}	
					
{After excluding} $\delta c_1$ from~\eqref{eq:403:variation_boundary_conditions} with the help of \eqref{eq:602:variation_terminal_constant} and comparing~\eqref{eq:403:variation_boundary_conditions} with~\eqref{eq:403:boundary_transversality_conditions}, {the 
 conditions} on the vectors $\boldsymbol{p}_{i}(0)$ and $\boldsymbol{p}_{i}(\tau_{i})$ are  represented {by~\eqref{eq:403:natural_boundary_conditions}.}
\end{proof}

 The existence of a solution to the Lagrange--Euler equation~\eqref{eq:402:Euler_Lagrange_ODE} follows from (i) the smoothness of the right-hand side in~\eqref{eq:402:Euler_Lagrange_ODE} and (ii) the fact that the number of vertex conditions at boundary points~\eqref{eq:402:essential_boundary_conditions} is certainly less or equal than the differential order of the system~\eqref{eq:402:Euler_Lagrange_ODE}. The uniqueness of the solution follows from the quadratic nature of the minimized functional~\eqref{eq:402:energy_vector_form} and its positive definiteness. 

{
There are two ways to find the optimal solution to the problem~\eqref{eq:301:homogeneous_control_problem},  \eqref{eq:301:boundary_value_problem} for the special values of time horizon: $T=M\lambda$ (that is $\tau_0=0$). 
The first way is to consider directly  the case $\tau_{0}=0$ and pose a uniform mesh on the time-space domain $\Omega$. 
The most important difference of this mesh from that described in Subsect.~\ref{sub:304} lies in the merging of the corresponding characteristics generated by the initial and terminal conditions (dashed and dotted} lines in Fig.~\ref{fig:02}).  
This means that only traveling waves $w^{\pm}_{k,m}$ in~\eqref{eq:306:edge_wave_functions} and control functions $u_{n,m}$ in~\eqref{eq:306:edge_control_functions} with odd indices $m$ remain in our approach. In this particular case, a simplified version the algorithm  discussed in Section~\ref{sec:3} can be applied, see details in \cite{Kostin:2022b}. All continuity conditions can be satisfied for $T=M\lambda\ge T^{*} = 2\lambda$. Thus, the value $T^{*}$ of the time horizon is the critical time for controllability of the system under study for $\tau_0=0$.


{
The second way to obtain the solution is to take the limit $\tau_{0}\to 0$ or $\tau_{1} = \lambda - \tau_{0}\to 0$ by using the property of continuous dependence of the solution on the parameter $\tau_{0}$. 
Since, as shown above, there is a unique limiting solution, there will be a continuous transition to this solution at $T=M\lambda$ for $M>2$ from the right ($\tau_0\to 0$) and left ($\tau_1\to 0$). 
If $M=2$ and $\tau_{0}\to 0$, the optimal motion $(v^{*}(t,x),r^{*}(t,x))$ as well as the control $\boldsymbol{u}^{*}(t)$ will approach to the critical solution at $T=T^{*}$.}

		\subsection{Example of the Optimal Control Design}\label{sub:404}

\begin{figure}[t]
	\begin{center}
          \noindent\centering{
          \large
          \unitlength \linewidth
          \begin{picture}(1,0.5)
          \put(0,0){\includegraphics[width=0.5\linewidth]{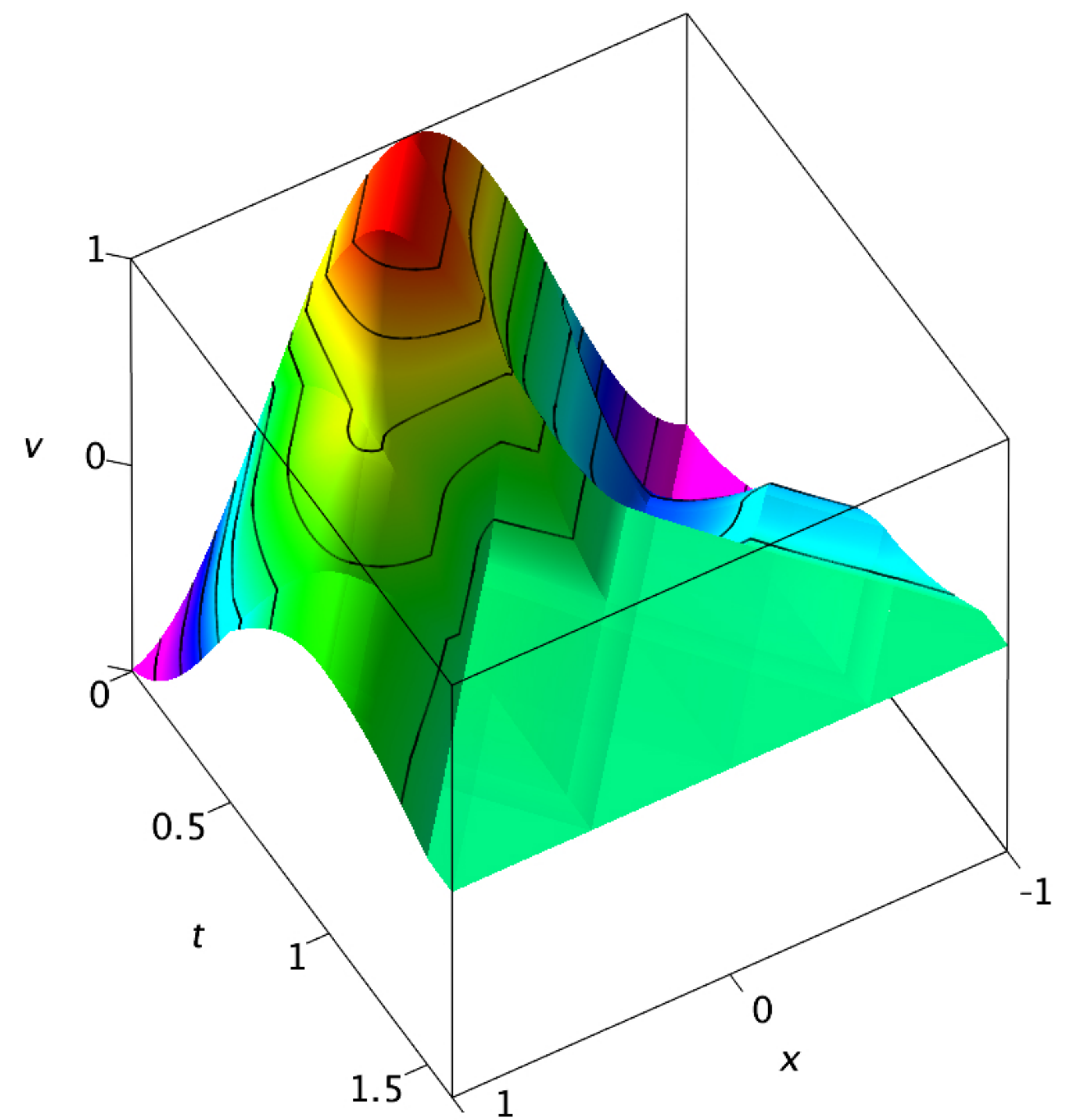}}
          \put(0.5,0){\includegraphics[width=0.5\linewidth]{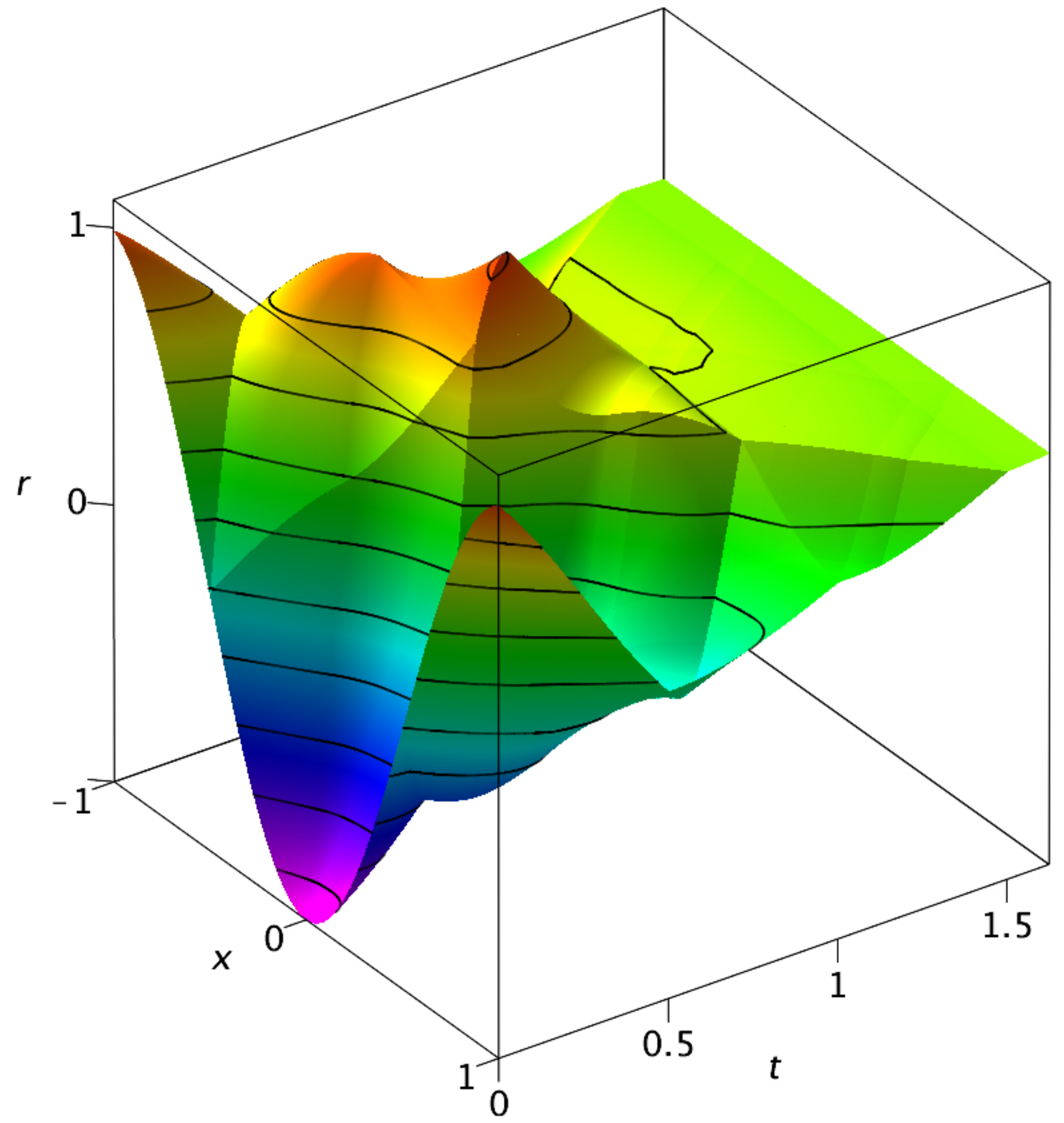}}          
          \put(0,0){\small (a)}
          \put(0.5,0){\small (b)}          
          \end{picture}\\
          }	
		\caption{{Optimal  solution for $N=4$, $T=1\frac{5}{8}$: (a) displacements $v(t,x)$, (b) potential $r(t,x)$.}}
		\label{fig:16}
	\end{center}
\end{figure}

Let us consider  the optimal control design,  that is Problem~\ref{prob:31}, for the system with four piezoelectric elements (see	Fig.~\ref{fig:02}).
The illustrative initial conditions throughout the subsection are	taken as	
{$v(0,x)=\cos 3x,$ $r(0,x)=-\cos 3x.$}
 Although the initial functions are quite simple and even, the solution to the problem has no symmetry about the time axis, since the distribution of the initial velocities of the rod points is an odd function of the spatial coordinate. 
Indeed, the initial momentum density is defined as $p_{0}(x)=r'_{0}(x)=3\sin 3x=v_{t}(0,x)$.
We take the terminal conditions	{$v(T,x)=0,$ $r(T,x)=c_1,$}
 which  means that the rod reaches its zero state at the end of the process.				
The control time is taken equal to $T = 1\frac{5}{8}$, what {generates} the mesh presented  in Fig.~\ref{fig:02}. The mesh parameters are $M=3$ $N=4$, $\lambda=\frac{1}{2}$, $\tau_0=\frac18$.

The resulting displacements $v(t,x)$ are shown in Fig.~\ref{fig:16}a. It is clearly seen that the rod reaches the 
{undeformed} state at the terminal time instant. The corner points appear along the characteristics,  which coincide with the edges of the mesh in Fig.~\ref{fig:02}.
In Fig.~\ref{fig:16}b the optimal dynamic potential $r(t,x)$ is presented. 
The terminal function $r(T,x)$ is constant and equal to $c_{1}  \approx  0.48$. 
 {Note that jumps} of derivatives occur on the same lines as for displacements.
Thus, the momentum density $p$ and force distribution $s$ defined by $r$ have discontinuities along the characteristics of the  mesh.
Nevertheless, the dynamic potential $r$ itself, according to~\eqref{eq:302:dAlembert_solutions}, is a continuous function.

%

The integrals of optimal force jumps $u_{n}(t)$ with $n \in J_{x} = \{-4,-2,0,2,4\}$ are shown in Fig.~\ref{fig:18}a.
These integrals are combinations of trigonometric and polynomial functions of time. Each $u_{n}(t)$ is continuous and by definition in~\eqref{eq:206:control_integrals} starts with the zero initial value $u_{n}(0)=0$.     
The optimal control forces $f_{n}(t)=u'_{n}(t)$ for $n\in J_{x}$ have jumps at the time instants $t=\frac18,\frac12,\frac58,1,\frac98,\frac32$. 
The control integrals $u_{k}(t)$ for the indices $k\in J_{c}=\{-5,-3,-1,1,3,5\}$ are continuous maps of the integrals of jumps $u_{n}(t)$ with $n \in \{-4,-2,0,2,4\}$ as shown in Subsect.~\ref{sub:0205}. 
Each function $u_{k}(t)$ has the zero initial value according to~\eqref{eq:206:control_integrals}. The values $u_{k}(T)$ for $k\in J_{s}=\{-3,1,1,3\}$ are derived from the terminal conditions~\eqref{eq:206:control_integrals} and 
{d'Alembert's} 
representation~\eqref{eq:302:dAlembert_solutions} of $r$. 
The terminal integrals $u_{\pm 5}(T)=c_{1}-r_{0}(\pm 1)$ are found via the boundary  and  terminal conditions~\eqref{eq:202:initial_conditions}, \eqref{eq:206:control_integrals}.
The original optimal control inputs $f_{k}(t)=u'_{k}(t)$ with $k \in J_{c}$ are obtained according to~\eqref{eq:204:control_relations} and presented for this example in Fig.~\ref{fig:18}b. 
The functions have discontinuities at the six time points   
and free values  at $t=0,T$.

\begin{figure}[t]
	\begin{center}
          \noindent\centering{
          \large
          \unitlength \linewidth
          \begin{picture}(1,0.5)
          \put(0,0){\includegraphics[width=0.4\linewidth]{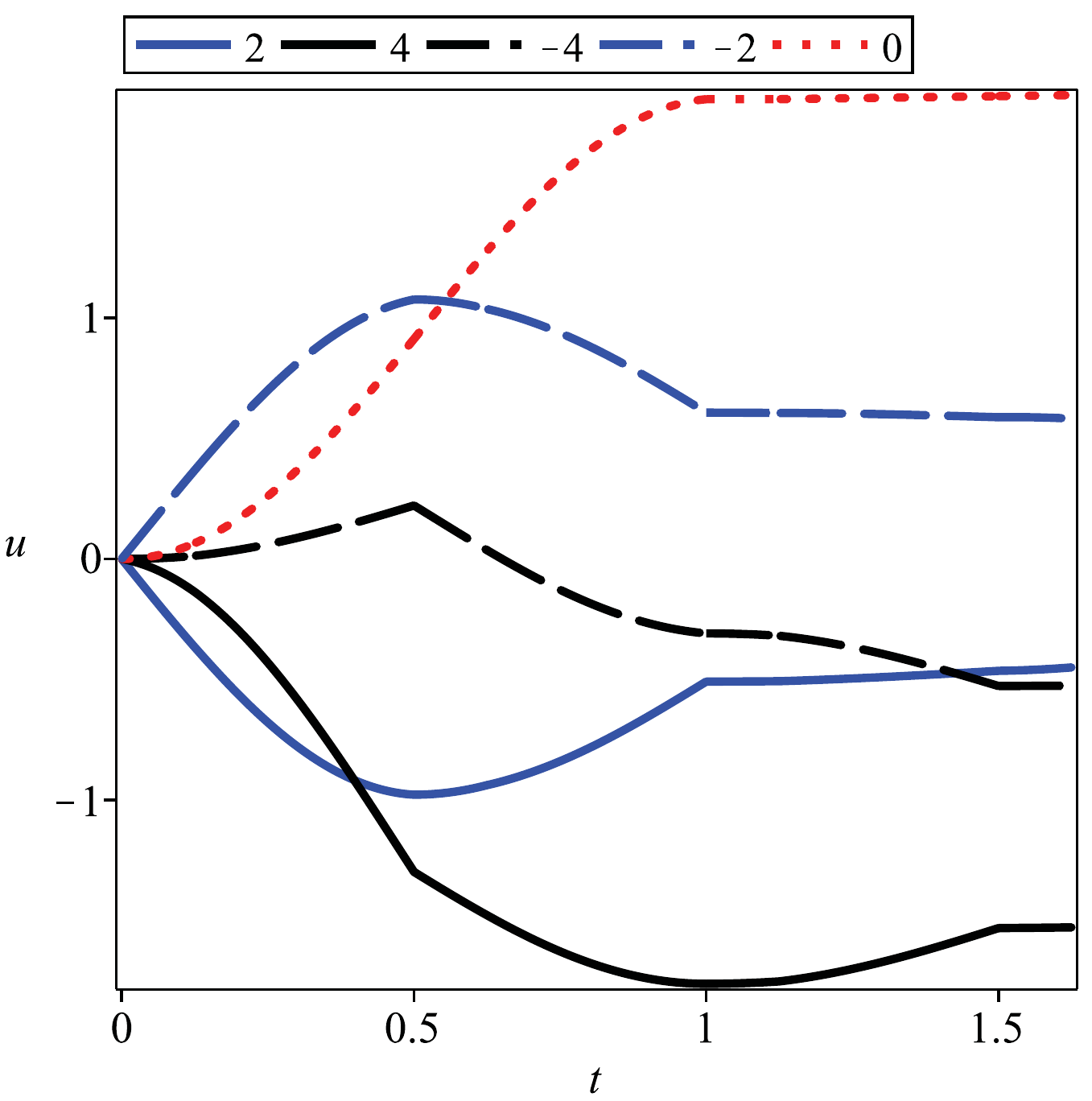}}
          \put(0.5,0){\includegraphics[width=0.4\linewidth]{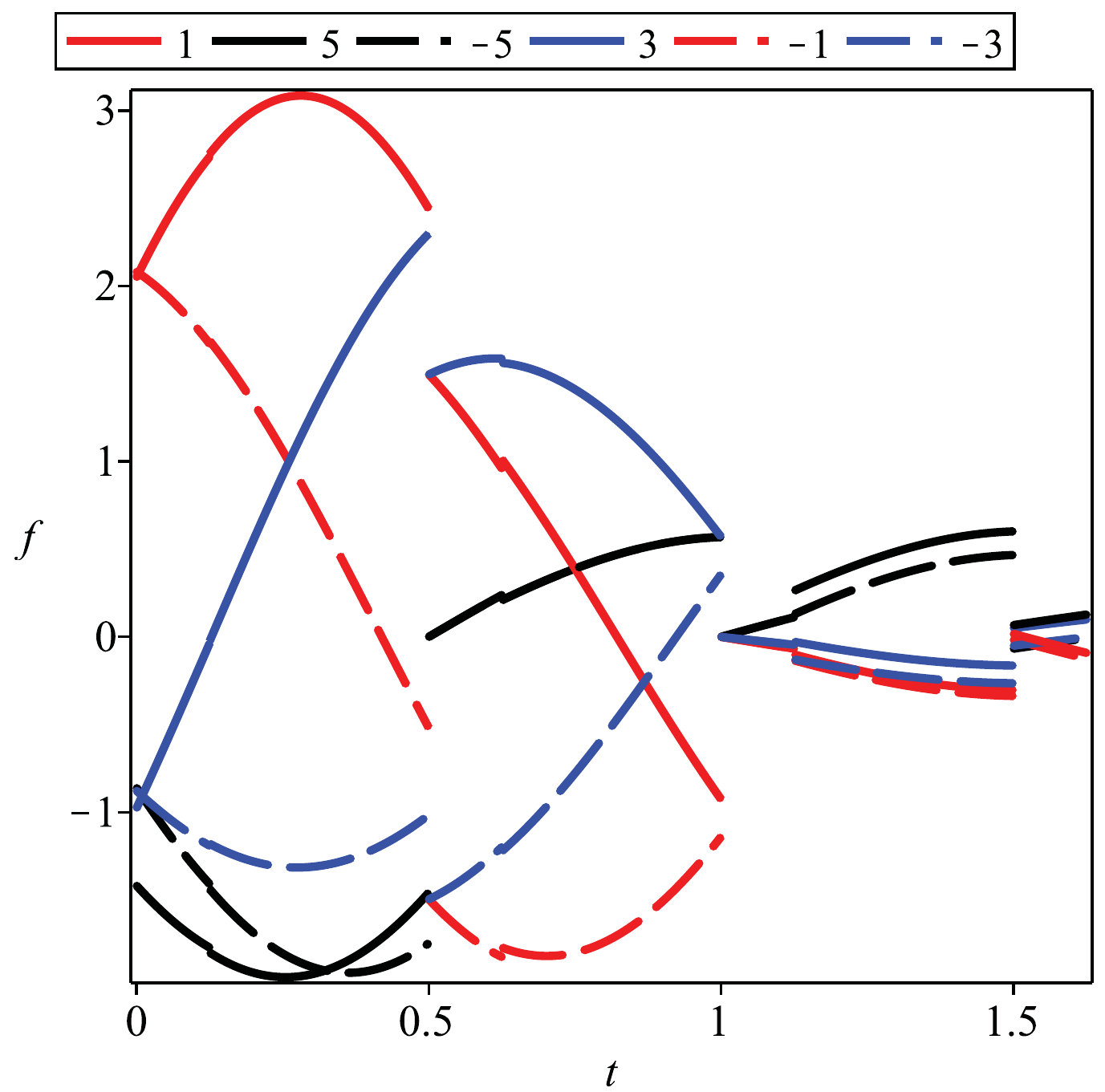}}          
          \put(0,0){\small (a)}
          \put(0.5,0){\small (b)}          
          \end{picture}\\
          }	
		\caption{{Optimal control inputs for $N=4$ and $T=1\frac{5}{8}$: (a) control $u_{n}(t)$ with the indices $n \in J_{x}$, (b) forces $f_{k}(t)$ with the indices $k \in J_{c}$.}}
		\label{fig:18}
	\end{center}
\end{figure}

		

		
{At the end, we present the dependency of the optimal value of the cost functional $E$ on the 
the control time $T$. 
We consider the integral of energy $F=T\cdot E(T,N)$ as a function of 
 $T>2\lambda$ for different numbers of the control elements $N$. 
 Due to the additivity of $F$, its value   certainly does not grow for increasing $T$ when the zero terminal state is considered.
The optimal values of the energy integral $F$ versus $T$ for $N=3,4,5,6$ (dot, dash, dashdot, {and} solid curves, respectively) for the initial and terminal states chosen above 
 are shown in~Fig.~\ref{fig:22}.
The optimal integral of energy for $N=2$ does not change with the control time $T$ and is equal to $F\approx 7.06$ for the chosen conditions and, thus, is omitted here. For $N>2$, 
$F(T,N)$ is continuous in $T$ and monotonically decreases when both $T$ and $N$ increase.
Except for the case $N=3$, the functions $F(T,N)$ are convex in $T$ on the open intervals of their smoothness $T\in (2M/N,2(M+1)/N)$.
The controlability condition discussed above restricts the domain of the map $F$ so that the control time $T\in[4/N,+\infty]$.  
As seen in Fig.~\ref{fig:22}, the rate of decrease of the functional $F$ for any $N$  almost vanishes when $T>2$.}   

\begin{figure}[t]
	\begin{center}
		\includegraphics[width=0.4\linewidth]{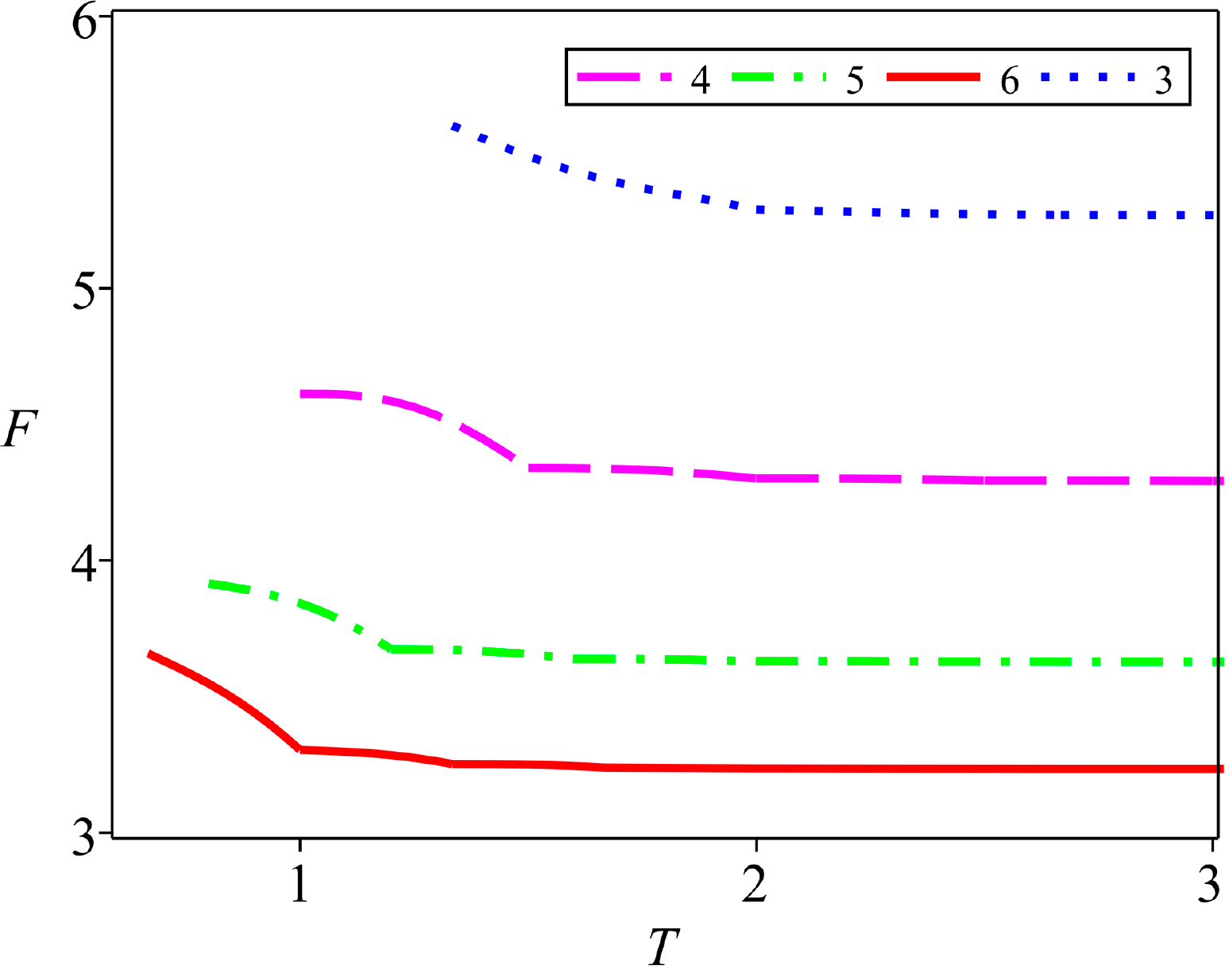}\\ 
		\caption{Optimal integral of energy $F$ vs. control time $T$ for $N=3,4,5,6$.}
		\label{fig:22}
	\end{center}
\end{figure}

\section{Conclusions and Outlook}\label{sec:5}

 The motion of a dynamic system
under external boundary loads and internal distributed stresses has been studied. 
The proposed mathematical model can be related to longitudinal vibrations of a thin elastic rod controlled by piezoelectric actuators symmetrically attached along its central line together with normal  forces at the ends. 	 
Since most real-world implementations of dynamical systems necessarily involve discretization,  we study rigorously a problem that is already discretized with respect to the distributed control input while our solution algorithm does not require discretization of state variables. 
For given initial and terminal states and a fixed time horizon, the optimal control problem is to minimize the mean energy stored in the rod during the motion.  
In the case of equidistantly placed actuators and a uniform rod, the shortest possible time for bringing the system with a given number of control elements to an arbitrary state is defined. 
An optimization algorithm using traveling waves is proposed to reduce the original problem to a one-dimensional variational problem with boundary conditions of a special kind.

We plan to study further the controllability of this system in the absence of external boundary forces and in the case when some of the piezoelectric elements are turned off or they are equidistantly spaced apart. 
We are also interested in {designing}  a bounded control and in estimating the accuracy of finite-mode approximations. 
This makes possible to look at more realistic models and propose a feedback on-line control.
In this regard, the issues of observability of a system with distributed piezoelectric sensors are relevant. 
A  possible extension of the proposed approach is to consider the problem of an elastic rod's bending and to develop numerical optimization procedures for inhomogeneous rods.  From a theoretical point of view, the transition to the limit case of an infinite number of infinitesimal actuators may show what assumptions on an infinite-dimensional (continuous in space) distributed input should be imposed to adequately exploit discretization for a practical use of such an input.

\bibliographystyle{ieeetr}
\bibliography{references}
                          
\end{document}